\theoremstyle{plain}
\newtheorem{theorem}{Theorem}
\newtheorem{conjecture}[theorem]{Conjecture}
\newtheorem{corollary}[theorem]{Corollary}
\newtheorem{lemma}[theorem]{Lemma}
\newtheorem{proposition}[theorem]{Proposition}
\newtheorem{question}[theorem]{Question}
\newcommand{\Qsub}[1]{\Q_{(#1)}}
\newcommand{\one}{\mathbf{1}}
\newcommand{\F}{\mathbb{F}}
\newcommand{\Z}{\mathbb{Z}}
\newcommand{\Q}{\mathbb{Q}}
\newcommand{\os}{Ozsv\'ath and Szab\'o}
\newcommand{\isom}{\cong}
\newcommand{\goto}{\rightarrow}
\newcommand{\x}{\mathbf{x}}
\newcommand{\y}{\mathbf{y}}
\newcommand{\rin}{\reflectbox{\rotatebox[origin=c]{90}{$\in$}}}
\newcommand{\kh}{\operatorname{Kh}}
\newcommand{\unkh}{\widehat{\operatorname{Kh}}}
\newcommand{\hfk}{\widehat{\operatorname{HFK}}}
\newcommand{\hf}{\widehat{\operatorname{HF}}}
\newcommand{\cfk}{\widehat{\operatorname{CFK}}}
\newcommand{\hfkm}{\operatorname{HFK^{--}}}
\newcommand{\cfkm}{\operatorname{CFK^{--}}}
\newcommand{\cflm}{\operatorname{CFL^{--}}}
\DeclareMathOperator{\rank}{rank}
\DeclareMathOperator{\Ker}{Ker}
\title{Genus two mutant knots with the same dimension in knot Floer and Khovanov homologies}
\author[Moore]{Allison Moore}
\address{Department of Mathematics \\
          The University of Texas \\
          Austin, TX 78712, USA}
\email{moorea8@math.utexas.edu}
\urladdr{http://www.ma.utexas.edu/$\sim$moorea8}
\author[Starkston]{Laura Starkston}
\address{Department of Mathematics \\
          The University of Texas \\
          Austin, TX 78712, USA}
\email{lstarkston@math.utexas.edu}
\urladdr{http://www.ma.utexas.edu/$\sim$lstarkston}
\date{}                                           
\begin{document}

\begin{abstract}
We exhibit an infinite family of knots with isomorphic knot Heegaard Floer homology. Each knot in this infinite family admits a nontrivial genus two mutant which shares the same total dimension in both knot Floer homology and Khovanov homology. Each knot is distinguished from its genus two mutant by both knot Floer homology and Khovanov homology as bigraded groups. Additionally, for both knot Heegaard Floer homology and Khovanov homology, the genus two mutation interchanges the groups in $\delta$-gradings $k$ and $-k$. 
\end{abstract} 

\maketitle

\section{Introduction}
\label{sec:Introduction}

Genus two mutation is an operation on a three-manifold $M$ in which an embedded, genus two surface $F$ is cut from $M$ and reglued via the hyperelliptic involution $\tau$. The resulting manifold is denoted $M^\tau$. When $M$ is the three-sphere, the genus two mutant manifold $(S^3)^\tau$ is homeomorphic to $S^3$ (see Section~\ref{sec:Mutation}). If $K\subset S^3$ is a knot disjoint from $F$, then the knot that results from performing a genus two mutation of $S^3$ along $F$ is denoted $K^\tau$ and is called a {\it genus two mutant of the knot} $K$. The related operation of Conway mutation in a knot diagram can be realized as a genus two mutation or a composition of two genus two mutations (see Section~\ref{sec:Mutation}). 

In~\cite{OS:GenusBounds}, \os~ demonstrate that as a bigraded object, knot Heegaard Floer homology can detect Conway mutation. However, it can be observed that in all known examples~\cite{BG:Computations}, the rank of $\hfk(K)$ as an ungraded object remains invariant under Conway mutation. The question of whether the rank of knot Floer homology is unchanged under Conway mutation, or  more generally, genus two mutation, remains an interesting open problem. Moreover, while it is known that Khovanov homology with $\F_2=\Z/2\Z$ coefficients is invariant under Conway mutation~\cite{Bloom},\cite{Wehrli:Mutation}, the case of $\Z$-coefficients is also unknown. The invariance of the rank of Khovanov homology under genus two mutation constitutes a natural generalization of the question.  Recently, Baldwin and Levine have conjectured~\cite{BL:Spanning} that the $\delta$-graded knot Floer homology groups
\[
	\hfk_{\delta}(L) = \bigoplus_{\delta=a-m}\hfk_m(L,a)
\]
are unchanged by Conway mutation, which implies that their total ranks are preserved, amongst other things. A parallel conjecture can be made about $\delta$-graded Khovanov homology, and the $\delta$-graded Khovanov homology groups are given by
\[  
	\kh_{\delta}(L) = \bigoplus_{\delta=q-2i}\kh_q^i(L).
\]
In this note, we offer an example of an infinite family of knots with isomorphic knot Floer homology, all of which admit a genus two mutant of the same dimension in both $\hfk$ and $\kh$, though each pair is distinguished by both $\hfk$ and $\kh$ as bigraded vector spaces.\footnote{Because we compute $\hfk$ and $\kh$ as graded vector spaces over $\Z/2\Z$ or $\Q$, the theorem has been formulated in terms of dimension rather than rank. }  Additionally, we show that both the $\delta$-graded $\hfk$ and $\kh$ groups distinguish the genus two mutants pairs. Here, knot Floer homology computations are done with $\F_2$-coefficients, and Khovanov homology computations are done with $\Q$-coefficients.

\begin{theorem}
\label{thm:main}
There exists an infinite family of genus two mutant pairs $(K_n, K_n^\tau)$, $n\in\Z^+$, in which
\begin{enumerate}
	\item each infinite family has isomorphic knot Floer homology groups, \begin{eqnarray*}
		\hfk_m(K_n,a) &\cong& \hfk_m(K_0,a), \;\text{for all } m,a \\
		\hfk_m(K_n^\tau,a) &\cong& \hfk_m(K_0^\tau,a), \; \text{for all } m,a,
	\end{eqnarray*}
	\item each genus two mutant pair shares the same total dimension in $\hfk$ and $\kh$, \begin{eqnarray*}
		\bigoplus_{m,a} \dim_{\F_2}\;\hfk_m(K_n,a) &=& \bigoplus_{m,a} \dim_{\F_2}\;\hfk_m(K_n^\tau,a) \\
		\bigoplus_{i,q} \dim_{\Q}\;\kh^i_q (K_n) &=& \bigoplus_{i,q} \dim_{\Q}\;\kh^i_q (K_n^\tau),
	\end{eqnarray*}
	\item each genus two mutant pair is distinguished by $\hfk$ and $\kh$ as bigraded groups, \begin{eqnarray*}
		\hfk_m(K_n,a) &\not\cong& \hfk_m(K_n^\tau,a) \; \text{for some }m,a\\
		\kh^i_q(K_n) &\not\cong& \kh_q^i(K_n^\tau)  \; \text{for some }i,q,
	\end{eqnarray*}
	\item each genus two mutant pair is distinguished by $\delta$-graded $\hfk$ and $\delta$-graded $\kh$, and moreover \begin{eqnarray*}
		\hfk_\delta(K_n) &\cong& \hfk_{-\delta}(K_n^\tau) \; \text{for all } \delta\\
		\kh_\delta(K_n) & \cong& \kh_{-\delta}(K_n^\tau) \; \text{for all } \delta. \\
	\end{eqnarray*}
\end{enumerate}
\end{theorem}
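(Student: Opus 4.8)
The theorem is an existence statement, so the plan is constructive: I would produce a single genus two mutant pair realizing the qualitative behavior of parts (2)--(4) and then spread it over an infinite family by a twisting operation that leaves the relevant invariants unchanged.

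\textbf{Step 1 (the base pair).} First I would fix a diagram carrying an embedded genus two surface $F$ and a knot $K_0$ disjoint from $F$, and form its mutant $K_0^\tau$ by regluing along the hyperelliptic involution $\tau$ as in Section~\ref{sec:Mutation}. Choosing $K_0$ to be a small, computer-tractable knot (a member of a twist family, in the spirit of the Kanenobu knots), I would compute $\hfk$ over $\F_2$ and $\kh$ over $\Q$ for both $K_0$ and $K_0^\tau$ and check directly that the base pair already exhibits equal total dimension (part 2), distinct bigraded groups (part 3), and the grading reversal $\hfk_\delta(K_0) \cong \hfk_{-\delta}(K_0^\tau)$ together with its Khovanov analogue (part 4).

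\textbf{Step 2 (the $\delta$-reversal).} The conceptual heart of the base case is the symmetry in part (4). Recall that knot Floer homology satisfies $\hfk_m(K,a)\cong \hfk_{-m}(mK,-a)$ under mirroring, so that the $\delta$-grading $\delta=a-m$ is reversed by taking mirrors. I would try to show that the particular genus two involution $\tau$ acts on the Maslov and Alexander gradings so that its net effect on $\delta$ is sign reversal, mimicking mirror symmetry at the level of $\delta$-graded groups without producing the mirror itself (which is why the bigraded groups still differ). Concretely I would write $\tau$ as a composition of the three commuting involutions generating the hyperelliptic mapping class of $F$ and track how each acts on the orientation conventions defining the two gradings; failing a clean structural argument, the reversal can simply be read off from the Step~1 computation.

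\textbf{Step 3 (the infinite family and the main obstacle).} Finally I would introduce an unknotted circle $C$ disjoint from $F$, and define $K_n$ and $K_n^\tau$ by inserting $n$ full twists along $C$ into $K_0$ and $K_0^\tau$ respectively; since $C$ and $F$ are disjoint, twisting and mutation commute, so $K_n^\tau$ is genuinely the genus two mutant of $K_n$. The main obstacle is part (1): proving that $\hfk_m(K_n,a)\cong \hfk_m(K_0,a)$ for \emph{every} $n$, not merely for finitely many computed cases. I would establish this either by exhibiting a doubly-pointed Heegaard diagram (or bordered invariant) for $K_n$ in which the $n$-fold twist region contributes only cancelling pairs of generators, so that the filtered chain homotopy type is independent of $n$, or by an induction on $n$ via the skein/surgery exact sequence in which the connecting maps vanish for grading reasons. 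The same mechanism, a long exact sequence localized in the twist region together with the base computation, would give the corresponding statement for $\kh$. Once part (1) is known, parts (2)--(4) for all $n$ follow immediately from Step~1, since every dimension, bigraded group, and $\delta$-graded group in the statement is then the same as for $K_0$ or $K_0^\tau$. I expect essentially all the difficulty to reside in Step~3: the construction and the base computation are finite checks, whereas proving \emph{exact} invariance of the bigraded groups across the entire family requires a structural argument that the inserted twists never interact with the portion of the complex carrying the homology.
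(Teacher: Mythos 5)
Your overall architecture---a computed base pair, a twisting operation commuting with the mutation, and an induction via a skein exact sequence---matches the paper's. But two of your steps have genuine gaps. First, in Step~3 the connecting maps in the oriented skein sequence do \emph{not} vanish ``for grading reasons'': the third term $H_*\bigl(\cflm(\mathcal{U})/(U_1-U_2)\bigr)$ is a free $\F_2[U]$-module of rank two sitting in bigradings $(-2d,-d)$ and $(-2d-1,-d)$, exactly where $\hfkm(K_n)$ can be nonzero, so gradings alone cannot kill the maps there. The paper's essential input is that the twist region lives in the band of a band-sum-of-unlink presentation, so every $K_n$ is \emph{slice} and hence $\tau(K_n)=s(K_n)=0$; the $\tau$ invariant locates the non-torsion generator of $\hfkm(K_n)$ in bigrading $(0,0)$, and $U$-equivariance of the sequence then forces the connecting maps to carry the free parts onto each other, leaving an isomorphism of torsion submodules. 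Your proposed twist along an arbitrary unknotted circle $C$ disjoint from $F$ would in general destroy both sliceness and the property that the resolved term of the skein triple is the unlink, and the induction would not close.

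Second, your final claim that once part (1) is known ``parts (2)--(4) for all $n$ follow immediately \ldots since every dimension, bigraded group, and $\delta$-graded group in the statement is then the same as for $K_0$ or $K_0^\tau$'' is false for Khovanov homology. In the paper, $\kh(K_n)$ is \emph{not} isomorphic to $\kh(K_0)$ as a bigraded group: all but two generators drift by $(+1,+2)$ in $(i,q)$ as $n$ increases, and only the total dimension ($=26$) and the $\delta$-graded pieces are constant in $n$. Part (1) of the theorem deliberately asserts $n$-independence only for $\hfk$. So you need a separate inductive computation of $\kh(K_n)$ for all $n$; the paper does this with the unoriented skein long exact sequence against the unlink, resolving the resulting two-parameter ambiguity in $\kh^0$ and $\kh^1$ by feeding $s(K_n)=0$ into Lee's spectral sequence. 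Your Step~2 structural speculation about $\tau$ reversing $\delta$ is not needed (and is not established anywhere in the paper); the $\delta$-reversal is simply read off from the explicit answers, which is the fallback you correctly allow for.
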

This example suggests that having invariant dimension of knot Floer homology or Khovanov homology is a property shared not only by Conway mutants, but by genus two mutant knots as well, offering positive evidence towards all the above open questions about total rank.

\subsection{Organization}In Section~\ref{sec:Mutation} we review genus two mutation and describe the infinite family of genus two mutant pairs. In Section~\ref{sec:Floer} we show that within each infinite family $\{K_n\}$ and $\{K^\tau_n\}$, the knots have isomorphic knot Heegaard Floer homology and that these families share the same dimension. In Section~\ref{sec:Khovanov} we show that each family also shares the same dimension of Khovanov homology. In Section~\ref{sec:Observation} we mention a few observations.

\section{Genus Two Mutation}
\label{sec:Mutation}

\begin{figure}[htb]
	\centerline{
	\includegraphics[width=9cm]{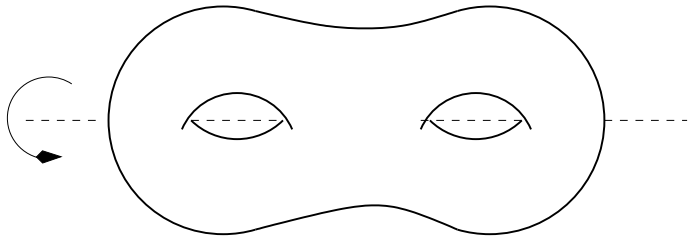}
	}
 	\caption{The genus two surface $F$ and hyperelliptic involution $\tau$.}
    	\label{fig:genus2surface}
\end{figure}
  
Let $F$ be an embedded, genus two surface in a compact, orientable three-manifold $M$, equipped with the hyperelliptic involution $\tau$. A genus two mutant of $M$, denoted $M^\tau$, is obtained by cutting $M$ along $F$ and regluing the two copies of $F$ via $\tau$~\cite{DGST:Behavior}. The involution $\tau$ has the property that an unoriented simple closed curve $\gamma$ on $F$ is isotopic to its image $\tau(\gamma)$.

When $M=S^3$, any closed surface $F\subset S^3$ is compressible. This implies by the Loop Theorem that $(S^3)^\tau$ is homeomorphic to $S^3$~\cite{DGST:Behavior}. Therefore, if $S^3$ contains a knot $K$ disjoint from $F$, mutation along $F$ is a well-defined homeomorphism of $S^3$ taking a knot $K$ to a potentially different knot $K^\tau$~\cite{DGST:Behavior}. In this note, we restrict our attention to surfaces of mutation which bound a handlebody containing $K$ in its interior. These mutations are called \emph{handlebody mutations}.

A Conway mutant of a knot $K\subset S^3$ is similarly obtained by an operation under which a Conway sphere $S$ interests $K$ in four points and bounds a ball containing a tangle. The ball containing the tangle is replaced by its image under a rotation by $\pi$ about a coordinate axis. In fact, Conway mutation of a knot can be realized as a special case of genus two mutation. Since $S$ separates $K$ into two tangles, i.e.
\[
	K = T_1 \cup_S T_2
\]
a genus two surface $F$ is formed by taking $S$ and tubing along either $T_1$ or $T_2$. The Conway mutation is then achieved by performing at most two such genus two mutations~\cite{DGST:Behavior}. Like Conway mutants, genus two mutants are difficult to detect and are indistinguishable by many knot invariants~\cite{DGST:Behavior}.

\begin{theorem}~\cite{CL:Mutations},~\cite{MT:Jones}
\label{thm:AlexanderJones}
The Alexander polynomial and colored Jones polynomials for all colors of a knot in $S^3$ are invariant under genus two mutation. Generalized signature is invariant under genus two handlebody mutation.
\end{theorem}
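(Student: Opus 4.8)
The plan is to derive all three statements from the single structural fact highlighted above: the hyperelliptic involution $\tau$ carries every unoriented simple closed curve on $F$ to an isotopic curve, while acting as $-\mathrm{id}$ on $H_1(F;\Z)$. In each case one writes $S^3 = M_1 \cup_F M_2$ (or, for the knot complement, $S^3 \setminus N(K) = A \cup_F B$), computes the relevant invariant from gluing data supported on $F$, and shows that re-gluing by $\tau$ leaves this data unchanged --- either because the invariant is built from unoriented curves, so that the $-1$ is invisible, or because the invariant already enjoys a symmetry that absorbs the sign.

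For the colored Jones polynomials I would use the Kauffman bracket skein module. Color $K$ by the $c$-th Jones--Wenzl idempotent and regard it as an element of the skein module $\mathcal{S}(S^3)$; decomposing along $F$ expresses $J^{(c)}_K$ as a bilinear pairing of the skein elements of $M_1$ and $M_2$ through the skein algebra $\mathcal{S}(F\times I)$ of the genus two surface. That pairing space is spanned by isotopy classes of (colored) unoriented multicurves on $F$, and since $\tau$ fixes each such class, it acts as the identity on $\mathcal{S}(F\times I)$. Hence the pairing --- and therefore the identity $J^{(c)}_{K^\tau} = J^{(c)}_K$ for every color $c$ --- is unchanged. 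The point to verify carefully is that the Jones--Wenzl coloring is compatible with the decomposition and that no framing or normalization factor is introduced by the cut-and-reglue.

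For the Alexander polynomial I would pass to the infinite cyclic cover of $X = S^3 \setminus N(K)$. Writing $X = A \cup_F B$ and pulling the cover back to $\tilde A, \tilde B, \tilde F$, the Mayer--Vietoris sequence of $\Z[t,t^{-1}]$-modules
\[
\cdots \to H_1(\tilde F) \to H_1(\tilde A)\oplus H_1(\tilde B) \to H_1(\tilde X) \to H_0(\tilde F) \to \cdots
\]
presents the Alexander module. Mutation alters only the connecting maps by $\tau_* = -\mathrm{id}$; because $-\mathrm{id}$ is an automorphism of $H_*(\tilde F)$ commuting with the deck transformation, the mutated sequence is isomorphic to the original, and the Alexander module --- hence $\Delta_{K^\tau}(t) \doteq \Delta_K(t)$ --- is preserved. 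Equivalently one may run the Conway potential / Seifert form argument of Cooper and Lickorish, where the symmetry $\Delta(t) \doteq \Delta(t^{-1})$ absorbs the orientation reversal. The delicate step is checking that $\tau$ lifts to the cover compatibly, i.e. that the curves of $F$ behave predictably under the linking-number homomorphism.

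Finally, for the generalized (Tristram--Levine) signatures under handlebody mutation, I would realize the invariants through a Seifert surface $\Sigma$ for $K$. Using that $F$ bounds a handlebody $V$ containing $K$, one isotopes $\Sigma$ to meet $F$ in a controlled collection of curves, so that the Seifert form splits into a part carried inside $V$ and a part outside. Because $\tau$ preserves each intersection curve up to isotopy, the mutated Seifert form is congruent to the original, and congruent forms have equal Tristram--Levine signatures for every $\omega \in S^1$. The handlebody hypothesis is precisely what permits this controlled intersection, and I expect the main obstacle of the whole theorem to lie here: arranging $\Sigma \cap F$ and verifying the congruence, since the signatures --- unlike the Kauffman bracket --- genuinely depend on orientation and so require the symmetry of the Hermitian form $(1-\omega)V + (1-\bar\omega)V^{T}$ to neutralize the sign in $\tau_*$.
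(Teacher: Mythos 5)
First, a point of comparison: the paper does not prove this statement. Theorem~\ref{thm:AlexanderJones} is quoted from the cited sources, so there is no internal proof to measure your proposal against; it has to be judged against the arguments in those references. Your skein-theoretic argument for the colored Jones polynomials is essentially the standard one and is sound: the Kauffman bracket skein module of $F\times I$ (and of a genus two handlebody) is spanned by unoriented multicurves, each preserved up to isotopy by the hyperelliptic involution, so $\tau$ acts as the identity on the skein module and the gluing pairing is unchanged. The one thing you elide is that for a general genus two surface in $S^3$ neither complementary piece need be a handlebody; the present paper sidesteps this by restricting to handlebody mutations, but the general statement requires reducing an arbitrary genus two mutation to handlebody (and Conway) mutations first.

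The other two parts have genuine gaps. For the Alexander polynomial, the step ``$-\mathrm{id}$ is an automorphism of $H_*(\tilde F)$ commuting with the deck transformation, so the mutated sequence is isomorphic to the original'' fails as stated: since $\tau_*=-\mathrm{id}$ on $H_1(F)$, it negates the linking-number homomorphism $H_1(F)\to\Z$ that classifies the restriction of the infinite cyclic cover to $F$, so $\tau$ lifts to a map that \emph{conjugates} the deck transformation ($t\mapsto t^{-1}$) rather than commuting with it. The two Mayer--Vietoris sequences are then related by a $t\leftrightarrow t^{-1}$ twist on some terms and not others, and extracting $\Delta_{K^\tau}(t)\doteq\Delta_K(t)$ from this---which is exactly where the symmetry $\Delta(t)\doteq\Delta(t^{-1})$ must genuinely be used---is the substance of the Cooper--Lickorish argument, not a formality you can wave at. For the generalized signatures, the assertion that ``because $\tau$ preserves each intersection curve up to isotopy, the mutated Seifert form is congruent to the original'' is not justified: the Seifert form records linking numbers in $S^3$ between cycles that may be supported on opposite sides of $F$, and these are not determined by the isotopy classes on $F$ of the curves of $\Sigma\cap F$. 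Showing that the two Seifert forms are congruent, or $S$-equivalent, or at least have equal $\omega$-signatures for all $\omega$, is precisely the content of the theorem; you have correctly located the difficulty but not resolved it.
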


\begin{theorem}~\cite[Theorem 1.3]{Ruberman:Volumes}
\label{thm:Volume}
Let $K^\tau$ be a genus two mutation of the hyperbolic knot $K$. Then $K^\tau$ is also hyperbolic, and the volumes of their complements are the same.
\end{theorem}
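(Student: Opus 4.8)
The plan is to turn the topological cut-and-reglue defining $K^\tau$ into an \emph{isometric} operation, exploiting a special feature of the hyperelliptic involution. By Thurston's hyperbolization the hypothesis gives $M = S^3 \setminus K$ a complete, finite-volume hyperbolic metric, and by Mostow rigidity this metric---and hence its volume---is a topological invariant of the complement. Since $F$ lies in the interior of a compact core of $M$, away from the cusp neighborhood of $K$, the mutation does not touch the cusp, so it suffices to exhibit \emph{some} complete finite-volume hyperbolic metric on $S^3 \setminus K^\tau$ of volume $\operatorname{vol}(M)$; rigidity then identifies this as the volume of $K^\tau$.

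The geometric input is that on a closed genus two surface the hyperelliptic involution $\tau$ acts trivially on Teichm\"uller space: every genus two Riemann surface is hyperelliptic, so for every finite-area hyperbolic structure on $F$ the mapping class of $\tau$ is realized by an honest isometric involution. Accordingly I would first make $F$ incompressible in $M$ (handling, or separately treating, the case where it compresses, which is one delicate point), and then realize it geometrically as a pleated surface in Thurston's sense. This equips $F$ with an intrinsic finite-area hyperbolic metric $\sigma$ pulled back from the ambient structure, and the fact above guarantees that $\tau$ is an isometry of $(F,\sigma)$.

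I would then cut $M$ along $F$ to obtain a hyperbolic manifold $M_F$ whose metric boundary is two copies of $(F,\sigma)$, carrying identical bending (pleating) data. Reassembling by the identity recovers $M$; reassembling by $\tau$ produces $S^3 \setminus K^\tau$. The crucial claim is that the regluing map is compatible with the ambient geometry: because $\tau$ is an isometry of $\sigma$ and can be arranged to preserve the pleating locus, the local hyperbolic structure matches across the seam with no crease. The glued metric is therefore hyperbolic, complete, and of volume $\operatorname{vol}(M_F) = \operatorname{vol}(M)$, even though the resulting manifold is globally a different knot complement. This yields $\operatorname{vol}(K^\tau) = \operatorname{vol}(K)$ and, in particular, that $K^\tau$ is hyperbolic.

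The main obstacle is precisely this compatibility: a pleated surface is a map, not an embedding, so one must check that $F$ admits a symmetric geometric representative that stays embedded and whose intrinsic metric \emph{and} extrinsic bending are $\tau$-invariant, for otherwise the two sides would not reglue isometrically. The triviality of $\tau$ on Teichm\"uller space is the decisive ingredient that forces the boundary metrics to agree---without a symmetry acting isometrically on \emph{every} hyperbolic structure, mutation would not preserve volume---so the technical heart of the argument is to produce a $\tau$-symmetric pleated (or minimal) representative of $F$ and to control the geometry along the cutting seam. The potential failure of incompressibility for a general mutation surface is a secondary complication that must be addressed before the pleated-surface machinery applies.
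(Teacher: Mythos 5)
You have correctly isolated the special feature of genus two that makes the theorem true --- the hyperelliptic involution is an isometry of \emph{every} hyperbolic structure on $F$ (equivalently, $\tau$ is central in the mapping class group and fixes every isotopy class of simple closed curve) --- and this is indeed the input Ruberman exploits. But note first that the paper offers no proof of this statement: it is quoted from Ruberman, and the surrounding text records that Ruberman's route is quite different from yours. He proves that the \emph{Gromov norm} is preserved under mutation along symmetric surfaces, proves separately that $K^\tau$ is hyperbolic (irreducibility, atoroidality, etc., checked by transporting essential spheres, tori and annuli through the mutation using the curve-preserving property of $\tau$), and then concludes the volume equality from $\mathrm{vol}=v_3\|\cdot\|$. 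That detour through simplicial volume is not an aesthetic choice; it is forced by the failure of exactly the step on which your argument rests.

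The gap is the ``compatibility'' step you flag but do not resolve, and it is not a technicality. An incompressible surface in a hyperbolic $3$-manifold is generically \emph{not} totally geodesic; a pleated representative is a map rather than an embedding, and even for an embedded representative the germ of the ambient metric along $F$ is governed by extrinsic data (the second fundamental form, the bending transverse to the pleating locus), not by the intrinsic metric $\sigma$ and the pleating lamination alone. Consequently $M_F$ is not a hyperbolic manifold with geodesic boundary, $\tau$ is not an isometry of a collar of $F$ in $M$, and ``reassembling by $\tau$'' does not produce a smooth hyperbolic metric. Your construction is valid only when $F$ can be isotoped to be totally geodesic, which is not the case for these surfaces; were the local matching automatic, the developing map would extend $\tau$ to a local isometry of a neighborhood of the seam and rigidity would impose far more symmetry on $M$ than it has. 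Finally, the compressible case you set aside as secondary is essential for the theorem as used in this paper: the mutation surfaces here bound handlebodies containing $K$, so one must reduce compressible symmetric surfaces to lower-complexity ones via $\tau$-equivariant compressions (or show the mutation is trivial), which is a genuine part of Ruberman's Theorem 1.3 rather than a footnote.
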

Theorem~\ref{thm:Volume} is a special case of a more general theorem which shows that the Gromov norm is preserved under mutation along any of several symmetric surfaces, including the genus two surface on which we are focused here. Ruberman also shows that cyclic branched coverings and Dehn surgeries along a Conway mutant knot pair yield manifolds of the same Gromov norm. Moreover, it is well-known that Conway mutation preserves the homeomorphism type of the branched double covering. In light of this, it is natural to ask whether $\Sigma_2(K)$ is homeomorphic to $\Sigma_2(K^\tau)$; however, this is not the case. We verify this by investigating the pair of genus two mutant knots in Figure~\ref{fig:mutantpair}, which we call $K_0$ and $K_0^\tau$ and which are known as $14^n_{22185}$ and $14^n_{22589}$ in Knotscape notation.

\begin{figure}
	\centerline{
		\psfragscanon
		\psfrag{k}{$14^n_{22185}$}
		\psfrag{kt}{$14^n_{22589}$}
		\includegraphics[width=9cm]{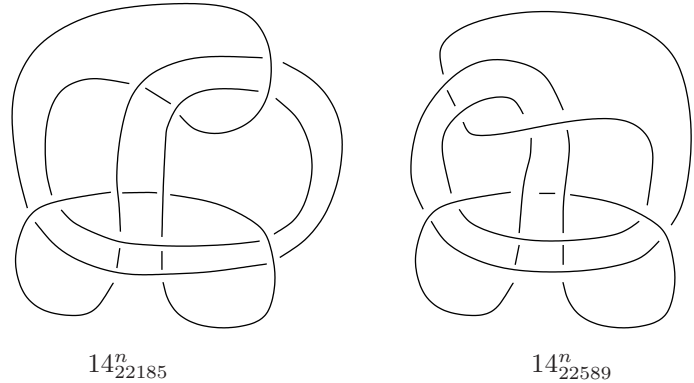}
		\psfragscanoff
		}
 	\caption{The genus two mutant pair $K_0=14^n_{22185}$ and $K_0^\tau=14^n_{22589}$.}
    	\label{fig:mutantpair}
\end{figure}

\begin{proposition}
\label{prop:nothomeomorphic}
	The branched double covers of $K_0$ and $K_0^\tau$ are not homeomorphic.
\end{proposition}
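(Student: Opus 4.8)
The plan is to distinguish $\Sigma_2(K_0)$ and $\Sigma_2(K_0^\tau)$ by producing a homeomorphism invariant of closed oriented $3$-manifolds on which the two branched double covers disagree. Both covers are rational homology spheres, and the order of the finite group $H_1(\Sigma_2(K);\Z)$ equals the knot determinant $|\Delta_K(-1)|$. This immediately rules out the crudest invariant: by Theorem~\ref{thm:AlexanderJones} the Alexander polynomials of $K_0$ and $K_0^\tau$ coincide, so $|H_1(\Sigma_2(K_0))| = |H_1(\Sigma_2(K_0^\tau))|$ and the order alone cannot separate the two. What the Alexander polynomial does \emph{not} pin down, however, is the isomorphism type of $H_1$ as a finite abelian group, and that is the first thing I would try to exploit. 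This is also precisely the point at which genus two mutation should diverge from Conway mutation, which is known to preserve $\Sigma_2$ outright.

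First I would compute $H_1(\Sigma_2(K_0);\Z)$ and $H_1(\Sigma_2(K_0^\tau);\Z)$ explicitly from the diagrams in Figure~\ref{fig:mutantpair}. From a checkerboard coloring of each $14$-crossing diagram one reads off a Goeritz matrix $G$, and then $H_1(\Sigma_2(K)) \cong \operatorname{coker}(G)$; equivalently one may take a Seifert matrix $V$ and use the presentation $V + V^T$. Reducing each presentation matrix to Smith normal form gives the invariant factor decomposition of the group. If these decompositions differ — for example if the common determinant carries a square factor $p^2$ and one cover realizes $\Z/p^2\Z$ while the other realizes $\Z/p\Z \oplus \Z/p\Z$ — then the groups are non-isomorphic, the manifolds cannot be homeomorphic, and the proposition follows from this single finite computation.

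The main obstacle is the possibility that the two first homology groups turn out to be isomorphic, in which case a finer invariant is required. The natural next step is the $\Q/\Z$-valued linking form on $H_1(\Sigma_2(K))$, obtained from $(V+V^T)^{-1}$ reduced modulo $\Z$: two non-isometric linking forms still obstruct a homeomorphism even when the underlying groups agree. Should that also fail to separate them, one can fall back on Heegaard Floer correction terms ($d$-invariants) across the $\operatorname{Spin}^c$ structures of each cover, or on the Casson--Walker invariant, each of which is algorithmically computable from these small branched-cover descriptions. I expect, though, that the group structure of $H_1$ already differs, so that the most elementary of these computations suffices to complete the argument.
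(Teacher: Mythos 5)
There is a genuine gap here: what you have written is a contingency plan rather than a proof. Every branch of your argument terminates in an unperformed computation --- Smith normal form of a Goeritz presentation, the $\Q/\Z$-valued linking form, $d$-invariants, Casson--Walker --- and the conclusion rests on the unverified expectation that ``the group structure of $H_1$ already differs.'' Until one of these invariants is actually computed for both covers and shown to disagree, nothing is established; and no finite list of such invariants is guaranteed a priori to separate two non-homeomorphic $3$-manifolds, so even the final fallback does not close the argument in principle. Moreover, there is concrete reason to doubt your first two steps. The hyperelliptic involution acts as $-\operatorname{id}$ on $H_1(F)$, which is precisely the mechanism by which mutation operations tend to preserve homological data of covers, so one should expect $H_1(\Sigma_2(K_0))$ and $H_1(\Sigma_2(K_0^\tau))$ (and plausibly their linking forms) to agree; the paper's own remark that ``a subtler hyperbolic invariant was required to distinguish their branched double covers'' is strong circumstantial evidence that the elementary algebraic invariants fail for this pair.

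The paper's actual proof takes an entirely different route: it computes in SnapPy the complex length spectra of the hyperbolic manifolds $\Sigma_2(K_0)$ and $\Sigma_2(K_0^\tau)$ up to a cutoff, observes that they differ, concludes that the manifolds are not isometric, and then invokes Mostow rigidity to upgrade ``not isometric'' to ``not homeomorphic.'' If you wish to salvage your approach, you must actually carry out the Goeritz computation and exhibit the two groups; if, as seems likely, they coincide, you are pushed down your list to invariants whose computation is no more elementary than the one the paper performs.
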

\begin{proof}
This is a fact which can be checked by computing the geodesic length spectra of $\Sigma_2(K_0)$ and $\Sigma_2(K_0^\tau)$ in SnapPy~\cite{SnapPy} with the following code snippet.\\

\centerline{
	\fbox{
		\begin{scriptsize}
			\lstinputlisting{arclengths.txt}
		\end{scriptsize}
	}
}
The complex length spectrum of a compact hyperbolic three-orbifold $M$ is the collection of all complex lengths of closed geodesics in $M$ counted with their multiplicities (Chapter 12 of~\cite{Alan:Arithmetic}). SnapPy demonstrates that the complex length spectra of $\Sigma_2(K)$ and $\Sigma_2(K^\tau)$ bounded above are different, therefore these manifolds are not isospectral, and therefore not isometric. Mostow rigidity says that the geometry of a finite-volume hyperbolic three-manifold is unique, therefore $\Sigma_2(K)$ and $\Sigma_2(K^\tau)$ are not homeomorphic.
\end{proof}
\begin{corollary}
\label{cor:notmutants}
	The genus two mutant pair $K_0$ and $K_0^\tau$ are not Conway mutants.
\end{corollary}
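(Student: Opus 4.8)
The plan is to argue by contraposition, exploiting the fact that Conway mutation and genus two mutation behave differently on the branched double cover. The single ingredient required is the classical observation, already recalled in the discussion preceding Proposition~\ref{prop:nothomeomorphic}, that Conway mutation preserves the homeomorphism type of the branched double cover: if $K'$ is a Conway mutant of $K$, then $\Sigma_2(K') \cong \Sigma_2(K)$. I would cite this, noting that it follows because a Conway sphere (a two-sphere meeting the knot in four points) lifts to a torus in the double branched cover, the three mutation involutions all lift to covering translations of that torus, and these lifts extend across the regluing so that the two covers are assembled from the same pieces by isotopic gluing maps.

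Granting this, the corollary is immediate. Suppose toward a contradiction that $K_0$ and $K_0^\tau$ were Conway mutants. Then $\Sigma_2(K_0)$ and $\Sigma_2(K_0^\tau)$ would be homeomorphic. But Proposition~\ref{prop:nothomeomorphic} shows, via the distinct complex length spectra computed in SnapPy together with Mostow rigidity, that these two hyperbolic three-manifolds are \emph{not} homeomorphic. This contradiction shows that $K_0$ and $K_0^\tau$ cannot be Conway mutants, even though (by construction) they are genus two mutants.

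The essential content of the argument is imported wholesale from Proposition~\ref{prop:nothomeomorphic}, so the corollary itself presents no real obstacle. The only point requiring care is stating the invariance of $\Sigma_2$ under Conway mutation in exactly the generality needed here, namely for arbitrary Conway mutation of knots in $S^3$, and confirming that this invariance genuinely fails for the genus two mutation at hand. Since the invariance of the double branched cover under Conway mutation is well established, verifying this is routine, and the distinction between the two notions of mutation is precisely what drives the conclusion.
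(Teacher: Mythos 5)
Your proposal is correct and follows exactly the paper's argument: the paper's proof is the one-line observation that Conway mutants have homeomorphic branched double covers, so the claim follows directly from Proposition~\ref{prop:nothomeomorphic}. Your additional sketch of why the double cover is a Conway mutation invariant is standard and accurate, just more detail than the paper supplies.
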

\begin{proof}
	Since Conway mutants have homeomorphic branched double covers, this follows directly from Proposition~\ref{prop:nothomeomorphic}.
\end{proof}
We will continue to explore the pair $14^n_{22185}$ and $14^n_{22589}$. As genus two mutants, they share all of the properties mentioned in Theorems~\ref{thm:AlexanderJones} and~\ref{thm:Volume}. Moreover, $14^n_{22185}$ and $14^n_{22589}$ are also shown in~\cite{DGST:Behavior} to have the same HOMFLY-PT and Kauffman polynomials, although in general these polynomials are known to distinguish larger examples of genus two mutant knots~\cite{DGST:Behavior}. Just as a subtler hyperbolic invariant was required to distinguish their branched double covers, we require a subtler quantum invariant to distinguish the knot pair. The categorified invariants $\hfk$ and $\kh$ do the trick.
\begin{theorem}
\label{thm:differenthomologies}
The genus two mutant knots $K_0$ and $K_0^\tau$ are distinguished by their knot Heegaard Floer homology and Khovanov homology, as well as by their $\delta$-graded versions.
\end{theorem}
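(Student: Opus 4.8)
The statement is entirely computational: $K_0$ and $K_0^\tau$ are the explicit $14$-crossing knots $14^n_{22185}$ and $14^n_{22589}$, so the plan is to compute $\hfk$ and $\kh$ for each and exhibit a single bidegree in which they disagree, then repeat after collapsing to the $\delta$-grading. First I would extract explicit diagrammatic data---a PD or DT code---for each knot from the standard tables, and carefully confirm that the chosen diagrams really do represent $14^n_{22185}$ and $14^n_{22589}$; this check matters because the whole theorem rests on having the correct pair of knots in hand.

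For the Khovanov homology I would feed these diagrams to an existing calculator over $\Q$ (for instance Bar-Natan's \texttt{KnotTheory} package, \texttt{JavaKh}, or \texttt{KhoHo}), producing the bigraded groups $\kh^i_q(K_0)$ and $\kh^i_q(K_0^\tau)$. For knot Floer homology over $\F_2$ I would use a grid-diagram or bordered implementation, obtaining $\hfk_m(K_0,a)$ and $\hfk_m(K_0^\tau,a)$. In each case I would tabulate the resulting Poincar\'e polynomial and point to at least one bidegree where the dimensions differ; any single such discrepancy distinguishes the knots as bigraded groups.

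For the $\delta$-graded versions I would collapse each bigraded table along the diagonals $\delta=a-m$ for $\hfk$ and $\delta=q-2i$ for $\kh$, sum the dimensions within each $\delta$-class, and again exhibit a value of $\delta$ at which the two collapsed invariants differ. I would record the full $\delta$-graded profiles here, since the same data later furnishes the reflection relations $\hfk_\delta(K_0)\cong\hfk_{-\delta}(K_0^\tau)$ and $\kh_\delta(K_0)\cong\kh_{-\delta}(K_0^\tau)$ of the main theorem; observing that the nonzero $\delta$-dimensions of one knot are the mirror image of the other's is precisely what guarantees the two collapsed invariants are unequal.

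The main obstacle is not conceptual but computational and a matter of bookkeeping: knot Floer homology of a $14$-crossing knot admits no closed-form shortcut, so the calculation must be run faithfully and cross-checked. A natural consistency test is that the graded Euler characteristic of $\hfk$ recovers the Alexander polynomial, which by Theorem~\ref{thm:AlexanderJones} must coincide for the two knots; likewise the Khovanov output should reproduce their common Jones polynomial. The real risk is a transcription or input error in the diagrams, and once the computations are trusted, reading off one differing bidegree together with one differing $\delta$-degree completes the proof.
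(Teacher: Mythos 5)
Your proposal is correct and matches the paper's own approach: the theorem is proved by direct machine computation of $\hfk$ over $\F_2$ (via Droz's program) and $\kh$ over $\Q$ (via JavaKh-v2), with the resulting bigraded and $\delta$-graded tables displayed in Table~\ref{table:hfkandkh} and the discrepancies read off directly. Your added consistency checks (matching Alexander and Jones polynomials) are sensible bookkeeping but not part of the paper's argument.
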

See Table~\ref{table:hfkandkh}. Khovanov homology with $\Z$ coefficients was computed in~\cite{DGST:Behavior} using KhoHo~\cite{Shumakovitch:Program}. Here, we include Khovanov homology with rational coefficients computed with the Mathematica program JavaKH-v2~\cite{GM:Program}. Since $\hfk$ is known to detect Conway mutation \cite{OS:GenusBounds}, it is not surprising that knot Floer homology can distinguish genus two mutant pairs. Nonetheless, the knot Floer groups $\hfk(K_0)$ and $\hfk(K^\tau_0)$ have been computed using the Python program of Droz~\cite{Droz:Program}. The key observation is that although both knot Floer homology and Khovanov homology distinguish the genus two mutants as bigraded vector spaces, in both cases the pairs are indistinguishable as ungraded objects.
\begin{center}
\begin{table}
	\begin{tabular}{|c|}
		\hline
		\begin{tabular}{cc}
			& \\
				$\bgroup\arraycolsep=3pt
					\begin{array}{|r|rrrrr|}\hline
						\multicolumn{6}{|c|}{\hfk(K_0) } \\ \hline
						&-2&-1&0&1&2 \\ \hline
						3& & & & &  \F \\
						2& & & & \F^2& \F \\
						1& & &\F^2 & \F^2&  \\
						0& & \F^2&\F^3 & &  \\
						-1&\F & \F^2& & &  \\
						-2& \F& & & &  \\ \hline
						\multicolumn{6}{|c|}{\dim =17}\\ \hline
					\end{array}
				\egroup$
			& \hspace{0.5cm}
				$\bgroup\arraycolsep=3pt
					\begin{array}{|r|rrr|}\hline
						 \multicolumn{4}{|c|}{\hfk(K_0^\tau) }  \\ \hline
						&  -1& 0& 1  \\ \hline
						1& & & \F^2\\
						0& & \F^5& \F^2\\
						-1& \F^2&\F^4 & \\
						-2& \F^2& &  \\ \hline
						 \multicolumn{4}{|c|}{\dim =17}\\ \hline
					\end{array}
				\egroup$	\\
		\end{tabular}\\	
		\begin{tabular}{cc}
			& \\
				$\bgroup\arraycolsep=3pt
					\begin{array}{|r|rrrrr|r|}\hline
						\multicolumn{7}{|c|}{\delta-\text{graded } \hfk(K_0) } \\ \hline
						&-2&-1&0&1&2 & \dim \\ \hline
						a-m=-1&\F & \F^2& \F^2& \F^2& \F& 8  \\
						a-m=0&\F & \F^2& \F^3& \F^2& \F& 9 \\ \hline
						\multicolumn{7}{|c|}{\dim =17}\\ \hline
					\end{array}
				\egroup$
			& \hspace{0.5cm}
				$\bgroup\arraycolsep=3pt
					\begin{array}{|r|rrr|r|}\hline
						 \multicolumn{5}{|c|}{\delta-\text{graded }\hfk(K_0^\tau) }  \\ \hline
						&  -1& 0& 1  & \dim \\ \hline
						a-m=0& \F^2& \F^5& \F^2 & 9\\
						a-m=+1& \F^2&\F^4 & \F^2 & 8\\ \hline
						 \multicolumn{5}{|c|}{\dim =17}\\ \hline
					\end{array}
				\egroup$	\\
			&
		\end{tabular} \\
		\begin{tabular}{|cc|c|}
			\hline
$\kh(K_0;\Q) = $ & $1_{\underline{13}}^{\underline{7}}\, 1_{\underline{9}}^{\underline{6}}\, 1_{\underline{7}}^{\underline{4}}\, 1_{\underline{7}}^{\underline{3}}\, 1_{\underline{3}}^{\underline{3}}\, 1_{\underline{5}}^{\underline{2}}\, 1_{\underline{3}}^{\underline{2}}\, 1_{\underline{3}}^{\underline{1}}\, 1_{\underline{1}}^{\underline{1}}\, 1_{\underline{3}}^{0}\, 2_{\underline{1}}^{0}\, 2_{1}^{0}\, 2_{1}^{1}\, 1_{3}^{1}\, 1_{1}^{2}\, 1_{3}^{2}\, 1_{5}^{2}\, 1_{3}^{3}\, 1_{5}^{3}\, 1_{7}^{3}\, 1_{7}^{4}\, 1_{7}^{5}\, 1_{11}^{6}$ & 
$\dim =26$\\
		\hline
			 $\kh(K^{\tau}_0;\Q) =$ & $1_{\underline{13}}^{\underline{7}}\, 1_{\underline{9}}^{\underline{6}}\, 1_{\underline{9}}^{\underline{5}}\, 1_{\underline{9}}^{\underline{4}}\, 1_{\underline{7}}^{\underline{4}}\, 1_{\underline{5}}^{\underline{4}}\, 1_{\underline{7}}^{\underline{3}}\, 1_{\underline{5}}^{\underline{3}}\, 1_{\underline{3}}^{\underline{3}}\, 1_{\underline{5}}^{\underline{2}}\, 2_{\underline{3}}^{\underline{2}}\, 1_{\underline{3}}^{\underline{1}}\, 1_{\underline{1}}^{\underline{1}}\, 1_{1}^{\underline{1}}\, 2_{\underline{1}}^{0}\, 2_{1}^{0}\, 1_{1}^{1}\, 1_{3}^{1}\, 1_{1}^{2}\, 1_{5}^{2}\, 1_{5}^{3}\, 1_{7}^{5}\, 1_{11}^{6}$ & $\dim=26$\\ 
			\hline
		\end{tabular}
		\\
		\begin{tabular}{cc}
		& \\
			$\bgroup\arraycolsep=3pt
				\begin{array}{|l|l|}\hline
					\multicolumn{2}{|l|}{\delta-\text{graded } \kh(K_0) } \\ \hline
					q-2i=-3& 4  \\
					q-2i=-1& 11  \\
					q-2i=1& 9  \\
					q-2i=3& 2 \\ \hline
				\end{array}
			\egroup$
		& \hspace{0.5cm}
			$\bgroup\arraycolsep=3pt
				\begin{array}{|l|l|}\hline
					 \multicolumn{2}{|l|}{\delta-\text{graded } Kh(K_0^\tau) }  \\ \hline
					q-2i=-3& 2\\
					q-2i=-1& 9\\
					q-2i=1& 11 \\
					q-2i=3& 4 \\ \hline
				\end{array}
			\egroup$	
	\\
		\end{tabular} \\	
		\\
	\hline
	\end{tabular}		
	\caption{Knot Floer groups are displayed with Maslov grading on the vertical axis and Alexander grading on the horizontal axis. Computation~\cite{Droz:Program} also confirms that $\hfk(K_0)\cong\hfk(K_1)$ and $\hfk(K^\tau_0)\cong\hfk(K^\tau_1)$. For Khovanov homology, $\mathbf{R}^i_j$ denotes Khovanov groups in homological grading $i$ and quantum grading $j$ with dimension $\mathbf{R}$. The underline denotes negative gradings. This notation originated in \cite{BarNatan:OnKhovanov}.}\label{table:hfkandkh}	
\end{table}
\end{center}
\begin{figure}[h]
	\centerline{
		\includegraphics[width=5cm]{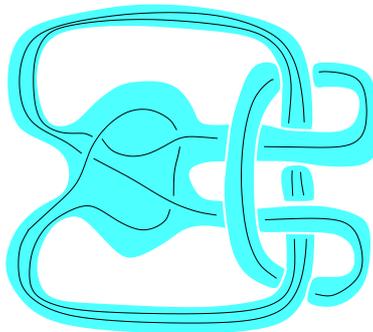}
		}
	\caption{The surface of mutation for all $K_n$. Note the surface bounds a handlebody.}  
    	\label{fig:mutationbody}
\end{figure}
\begin{figure}[h]
	\centering
	\subfloat[Oriented skein triple of $K_n$, $K_{n-2}$ and $\mathcal{U}$.]{
		\psfrag{L+}{$K_n$}
		\psfrag{L-}{$K_{n-2}$}
		\psfrag{L0}{$\mathcal{U}$}
		\label{fig:orientedskein}
		\includegraphics[width=7cm]{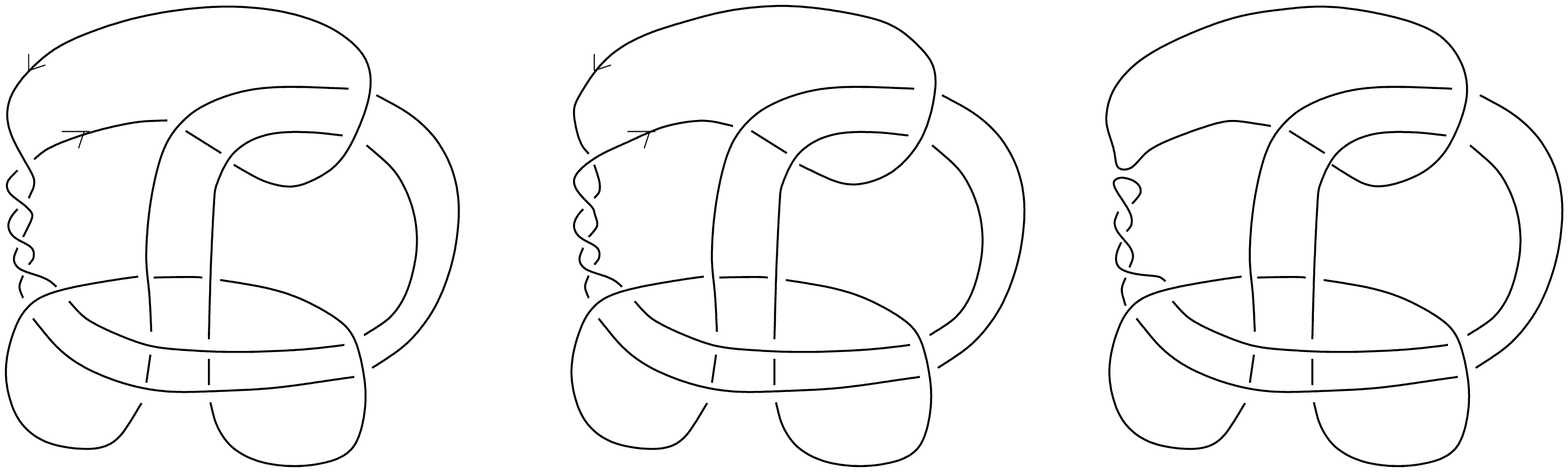}
		} 
	\;\;\;\;\;\;\;\;\;\subfloat[Unoriented skein triple of $K_n$, $K_{n-1}$ and $\mathcal{U}$.]{
		\psfrag{L+}{$K_n$}
		\psfrag{Li}{$K_{n-1}$}
		\psfrag{L0}{$\mathcal{U}$}
		\label{fig:unorientedskein}
		\includegraphics[width=7cm]{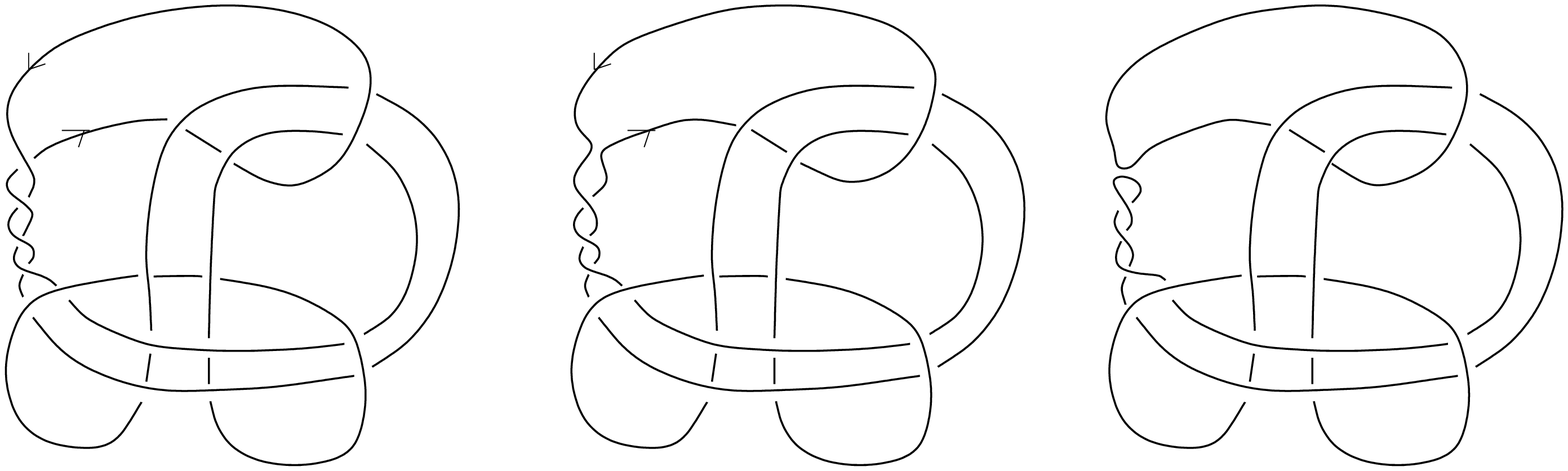}
		} 
	\caption{Oriented and unoriented skein triples.}\label{fig:skeintriples}
\end{figure}

We will derive an infinite family of knots from the pair $14^n_{22185}$ and $14^n_{22589}$. Notice that each of these can be formed as the band sum of a two-component unlink. Let us call $14^n_{22185}$ and $14^n_{22589}$ by $K_0$ and $K^\tau_0$, respectively. By adding $n$ half-twists with positive crossings to the bands of $K_0$ and $K^\tau_0$, as in Figure~\ref{fig:skeintriples}, we obtain knots $K_n$ and $K^\tau_n$. It is visibly clear that $K^\tau_n$ is the genus two mutant of $K_n$ by the same surface of mutation relating $K_0$ and $K^\tau_0$, illustrated in Figure~\ref{fig:mutationbody}.

Observe that by resolving a crossing in the twisted band, $K_n$ and $K_{n-2}$ fit into an oriented skein triple $(L_+,L_-,L_0)$ with $L_0$ equal to the two-component unlink $\mathcal{U}$ for all integers $n>1$. Moreover, $K_n$ and $K_{n-1}$ fit into an unoriented skein triple, again with third term the unlink. $K^\tau_n, K^\tau_{n-1} ,K^\tau_{n-2}$ and $\mathcal{U}$ fit into these same oriented and unoriented skein triples. 
\begin{figure}[h]
	\centerline{
		\includegraphics[width=8cm]{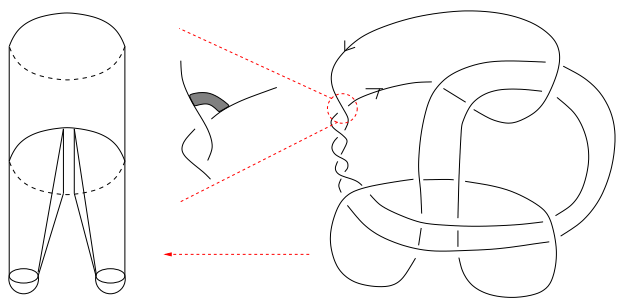}
		}
	\caption{A smooth cobordism illustrating that $K_n$ is slice. }  
   	\label{fig:knotcobordism}
\end{figure}
\begin{lemma}
\label{lemma:tauands}
	The \os~$\tau$ invariant and Rasmussen $s$ invariant vanish for all $K_n$ and $K_n^\tau$.
\end{lemma}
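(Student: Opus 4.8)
The plan is to exploit that both $\tau$ and $s$ are smooth concordance invariants bounded above by the smooth four-genus: one has $|\tau(K)| \le g_4(K)$ and $|s(K)| \le 2g_4(K)$ for every knot $K$, so both invariants vanish on any slice knot. It therefore suffices to prove that every $K_n$ and every $K_n^\tau$ is smoothly slice, and the entire lemma reduces to a sliceness statement.

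First I would establish that each $K_n$ is slice. Recall from Section~\ref{sec:Mutation} that $K_0$ is obtained as a band sum of a two-component unlink, and that $K_n$ is produced from $K_0$ by inserting $n$ positive half-twists into the band. Fusing a two-component unlink along a single band always yields a ribbon knot: the two components bound disjoint embedded disks $D_1, D_2$ in $S^3$, and the union $D_1 \cup b \cup D_2$ is an immersed disk whose only singularities are ribbon intersections of the band $b$ with the disks. Twisting the band is a local modification that does not disturb this ribbon structure, so $K_n$ bounds a ribbon disk for every $n$; pushing this immersed disk into $B^4$ produces the embedded slice disk depicted in Figure~\ref{fig:knotcobordism}. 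Hence $g_4(K_n) = 0$.

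The same argument applies verbatim to the mutant family. Since $K_0^\tau$ is likewise a band sum of a two-component unlink and $K_n^\tau$ is obtained by the identical half-twist insertion, each $K_n^\tau$ is ribbon and therefore slice, so $g_4(K_n^\tau) = 0$. Feeding $g_4 = 0$ into the genus bounds above forces $\tau(K_n) = \tau(K_n^\tau) = 0$ and $s(K_n) = s(K_n^\tau) = 0$ for all $n$.

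I expect the only genuine point requiring care to be the claim that sliceness persists under the half-twisting of the band for all $n$ simultaneously, rather than merely for the single knot $K_0$: one must check that inserting twists keeps the fusion surface an immersed ribbon disk and does not introduce clasp-type singularities. Handling $K_n^\tau$ presents no additional difficulty, since its description as a twisted band sum of an unlink is parallel to that of $K_n$. It is worth noting, however, that because $\tau$ and $s$ are not among the invariants known to be preserved by genus two mutation (compare Theorem~\ref{thm:AlexanderJones}), it is essential to verify sliceness of the mutant family directly rather than attempt to transport it across the mutation.
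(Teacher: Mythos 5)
Your proposal is correct and follows essentially the same route as the paper: observe that each $K_n$ and $K_n^\tau$ is a band sum of a two-component unlink (the twisting only changes the band, so this holds for all $n$), hence smoothly slice, and then apply the four-ball genus bounds $|\tau(K)| \leq g_*(K)$ and $|s(K)| \leq 2g_*(K)$ to conclude both invariants vanish. The only difference is that you spell out the ribbon-disk construction where the paper cites it as a standard fact from Lickorish.
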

\begin{proof}
The knots $K_n$ and $K_n^\tau$ are formed from the band sum of a two-component unlink. In general, if $K$ is any such knot, then $K$ is smoothly slice. This is a standard fact (see for example~\cite[p. 86]{Lickorish}), and the slicing disk is illustrated in Figure~\ref{fig:knotcobordism}. \os~define the smooth concordance invariant $\tau(K)\in \Z$ in~\cite[Corollary 1.3]{OS:FourBallGenus} and Rasmussen defines a smooth concordance invariant $s(K)\in2\Z$ in~\cite[Theorem 1]{Rasmussen:Khovanov}. Both $\tau(K)$ and $s(K)$ provide lower bounds on the four-ball genus. 
\[
	|\tau(K)| \leq g_*(K) \text{  \;\;\; and \;\;\;  } |s(K)| \leq 2g_*(K).
\]
Since all of our knots are slice, we immediately obtain $\tau=s=0$.
\end{proof}
\section{Knot Floer Homology}
\label{sec:Floer}
Knot Floer homology is a powerful invariant of oriented knots and links in an oriented three manifold $Y$, developed independently by \os~\cite{OS:KnotInvariants} and Rasmussen~\cite{Rasmussen:Thesis}. We tersely paraphrase~\os's construction of the invariant for knots from~\cite{OS:KnotInvariants}, and refer the reader to~\cite{OS:KnotInvariants} for details of the construction. 
\subsection{Background from knot Floer homology}
To a knot $K\subset S^3$ is associated a doubly pointed Heegaard diagram $(\Sigma, \boldsymbol\alpha,\boldsymbol\beta,z,w)$. The data of the Heegaard diagram gives rise to chain complexes ($\cfkm(K), \partial^-)$ and $(\cfk(K), \widehat{\partial})$.  These complexes come equipped with a bigrading $(M,A)$, where $M$ denotes Maslov grading and $A$ denotes Alexander grading. $\cfkm(K)$ is an $\F_2[U]$ module, where the action of $U$ reduces $A$ by one and $M$ by two. The differentials $\partial^-$ and $\widehat{\partial}$ preserve $A$ and reduce $M$ by one. The homology groups $\hfkm(K)$ and $\hfk(K)$ are invariants of $K$. 

We will require the following theorem of~\os~specialized to the case where $L_+$ and $L_-$ are knots, which we state without proof.
\begin{theorem}~\cite[Theorem 1.1]{OS:Squence}
  \label{thm:skein}
Let $L_+$, $L_-$ and $L_0$ be three oriented links, which differ at a single crossing as indicated by the notation. Then, if $L_+$ and $L_-$ are knots, there is a long exact sequence
 \[
    \cdots\longrightarrow \hfkm_m(L_+,a)\stackrel{f^-}{\longrightarrow}\hfkm_{m}(L_-,a) \stackrel{g^-}{\longrightarrow} H_{m-1} \left( \frac{\cflm(L_0)}{U_1-U_2},a \right) \stackrel{h^-}{\longrightarrow}\hfkm_{m-1}(L_+,a)\longrightarrow \cdots
  \]
\end{theorem}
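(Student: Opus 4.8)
The plan is to realize all three groups from a single Heegaard diagram and to identify the third term as the mapping cone of the first map, so that the long exact sequence becomes the standard long exact sequence of a mapping cone. First I would fix a multi-pointed Heegaard diagram presenting $L_+$ in which the distinguished crossing is isolated inside a standard disk, so that the crossing datum is carried by a single attaching circle. Replacing that circle by two nearby curves produces curves $\beta_+$, $\beta_-$, $\beta_0$ (with all remaining attaching curves held fixed) that present $L_+$, $L_-$, and $L_0$ respectively. Since $L_0$ is the oriented resolution of a crossing in the knot $L_+$, its diagram acquires an extra pair of basepoints, so $\cflm(L_0)$ is naturally a module over $\F_2[U_1,U_2]$; passing to the quotient by $U_1-U_2$ is precisely what identifies the two formal variables and makes the grading and $\F_2[U]$-module structure of the third term compatible with $\hfkm(L_\pm)$.

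Next I would apply the holomorphic-triangle machinery to the triple diagrams determined by $(\beta_+,\beta_-)$, $(\beta_-,\beta_0)$, and $(\beta_0,\beta_+)$. Counting holomorphic triangles with the appropriate basepoint constraints yields chain maps $f^-$, $g^-$, $h^-$, and a model computation in the standard local picture near the crossing identifies the relevant triangle counts, pinning down the Maslov shifts ($f^-$ preserving $m$ and the connecting maps dropping $m$ by one). The goal is then to show that the complex computing $H_*(\cflm(L_0)/(U_1-U_2))$ is chain homotopy equivalent, up to an overall grading shift, to $\cone(f^-)$, after which the long exact sequence of the mapping cone yields the stated sequence.

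The main obstacle is establishing exactness, which amounts to two things. First, I must show that consecutive triangle maps compose to a null-homotopic map; the null-homotopy is produced by counting holomorphic quadrilaterals in the associated quadruple diagram, using a degeneration of the moduli of rectangles into pairs of triangles. Second, I must show that the resulting map into the mapping cone is a quasi-isomorphism, which follows from a filtration (spectral sequence) argument in which the associated graded is governed by the lowest-area "small triangle" counts in the local diagram. Both steps require careful control of the moduli spaces of holomorphic polygons and of admissibility of the diagrams, and the bookkeeping of the two basepoints near the resolved crossing---the source of the $U_1-U_2$ quotient---is exactly what separates this knot/link skein sequence from the closed-manifold surgery exact triangle. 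This polygon-counting and acyclicity argument is the technically demanding heart of the proof, and it is the reason \os~state the result here without reproducing its proof.
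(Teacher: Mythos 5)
The paper does not prove this statement: it is quoted verbatim from Ozsv\'ath--Szab\'o (\cite[Theorem 1.1]{OS:Squence}) and explicitly ``stated without proof,'' so there is no in-paper argument to measure your sketch against. The only in-paper content bearing on the proof is the remark immediately following the theorem, which records the structure of the Ozsv\'ath--Szab\'o argument: a chain map $f:\cfkm(L_+)\to\cfkm(L_-)$ whose mapping cone is quasi-isomorphic to the cone of $U_1-U_2$ on $\cflm(L_0)$, hence to $\cflm(L_0)/(U_1-U_2)$, with all maps $U$-equivariant. Your sketch is consistent with that description --- you correctly identify the third term as (a shift of) $\cone(f)$, you correctly explain why the extra pair of basepoints on the resolved link is the source of the $U_1-U_2$ quotient, and the polygon-counting strategy (triangle maps, null-homotopies from quadrilaterals, a small-triangle filtration to detect the quasi-isomorphism) is the standard technology behind such sequences and behind the hat-version skein triangle in \cite{OS:KnotInvariants}. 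Two caveats. First, the actual argument in \cite{OS:Squence} is organized somewhat differently from a three-periodic exact-triangle detection: they construct $f$ and then exhibit an explicit diagram of chain maps (their Section 3.1) identifying $\cone(f)$ with $\cone(U_1-U_2)$ directly; your route would also need to produce the degree shift that places the third term in Maslov grading $m-1$. Second, the one feature of the sequence that this paper actually uses downstream (in the proof of Theorem~\ref{thm:floergroups}) is $U$-equivariance of $f^-$, $g^-$, $h^-$; your sketch gestures at module-structure compatibility but should state explicitly that the triangle maps and the quasi-isomorphism are $U$-equivariant, since without that the torsion/non-torsion bookkeeping later in the paper does not go through. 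As a sketch of a deep external theorem it is reasonable, but all of the analytic content (admissibility, moduli-space degenerations, the filtration argument) is deferred rather than supplied.
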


We remark that the skein exact sequence of Theorem~\ref{thm:skein} is derived from a mapping cone construction. Indeed, \os~show in~\cite[Theorem 3.1]{OS:Squence} that there is a chain map $f:\cfkm(L_+)\rightarrow \cfkm(L_-)$ whose mapping cone is quasi-isomorphic to the mapping cone of the chain map $U_1 - U_2 : \cflm(L_0)\rightarrow \cflm(L_0)$, which is in turn quasi-isomorphic to the complex $\cflm(L_0)/U_1 - U_2$. 
The maps in the diagram appearing in~\cite[Section 3.1]{OS:Squence} which determine the quasi-isomorphism from the cone of $f$ to the cone of $U_1 - U_2$ are $U$-equivariant. 
The map $f^-$ appearing in the sequence above is the map induced on homology by $f$. The maps $g^-$ and $h^-$ are induced by inclusions and projections of the mapping cone of $f$ along with the quasi-isomorphism. Therefore the long exact sequence is $U$-equivariant.
\begin{lemma}
\label{lemma:unlink}
Let $\mathcal{U}$ be the two-component unlink in $S^3$. $\mathcal{U}$ corresponds with the unknot $\widetilde{\mathcal{U}}\subset S^2\times S^1$, whose knot Floer homology is
\begin{eqnarray}
	\hfk(S^3,\mathcal{U}) \cong {\F_2}_{\begin{tiny}\begin{array}{l}m=0 \\ a=0\end{array}\end{tiny} }\oplus {\F_2}_{\begin{tiny}\begin{array}{l}m=-1 \\ a=0\end{array}\end{tiny}} \\
	H_*\left(\frac{\cflm(\mathcal{U})}{U_1-U_2} \right) \cong \hfk(S^3,\mathcal{U})\otimes_{\F_2} \F_2[U]
\end{eqnarray}
where in the module $\F_2[U]$, the action of $U$ drops the Maslov grading by two and the Alexander grading by one.
\end{lemma}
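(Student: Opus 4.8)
The plan is to use the correspondence recorded in the statement: the two-component unlink $\mathcal{U}\subset S^3$ is the knotification of $\widetilde{\mathcal{U}}$, the unknot in $S^2\times S^1$ obtained by joining the two unknotted components with a band running once through the $S^2\times S^1$ one-handle, so that $\widetilde{\mathcal{U}}$ is null-homologous and in fact bounds an embedded disk. Both displayed equations then reduce to a single computation of the Heegaard Floer invariants of $S^2\times S^1$. The two facts I would invoke are \os's theorem that knotification preserves knot Floer homology, giving $\hfk(S^3,\mathcal{U})\cong\hfk(S^2\times S^1,\widetilde{\mathcal{U}})$ together with the chain-level statement $\cflm(\mathcal{U})/(U_1-U_2)\simeq\cfkm(S^2\times S^1,\widetilde{\mathcal{U}})$, and the fact that a knot bounding a disk has the knot Floer homology of its ambient manifold, concentrated in Alexander grading $0$.

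For the first equation I would argue that, because $\widetilde{\mathcal{U}}$ bounds a disk, $\hfk(S^2\times S^1,\widetilde{\mathcal{U}})\cong\hf(S^2\times S^1)$ supported in Alexander grading $0$. Since $\hf(S^2\times S^1)\cong\F_2\oplus\F_2$ with generators in Maslov gradings $\tfrac12$ and $-\tfrac12$, normalizing to the integer Maslov grading of the knot complex places the two generators in bigradings $(0,0)$ and $(-1,0)$, which is precisely equation (1). As a consistency check, the Alexander polynomial of a split link vanishes, so $\hfk(\mathcal{U})$ must have vanishing Euler characteristic, forcing even total rank and ruling out a single generator.

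For the second equation I would first record that $\cflm(\mathcal{U})$ is a free $\F_2[U_1,U_2]$-complex whose homology is $\F_2[U_1,U_2]\oplus\F_2[U_1,U_2]$, the two free summands being the minus-flavored analogues of the two generators above. Since $U_1-U_2$ is a nonzerodivisor, the short exact sequence of complexes
\[
	0\longrightarrow\cflm(\mathcal{U})\xrightarrow{\,U_1-U_2\,}\cflm(\mathcal{U})\longrightarrow\frac{\cflm(\mathcal{U})}{U_1-U_2}\longrightarrow 0
\]
yields a long exact sequence in which multiplication by $U_1-U_2$ is injective on homology, so that
\[
	H_*\!\left(\frac{\cflm(\mathcal{U})}{U_1-U_2}\right)\cong\operatorname{coker}\bigl(U_1-U_2\bigr)\cong\frac{\F_2[U_1,U_2]\oplus\F_2[U_1,U_2]}{(U_1-U_2)}\cong\F_2[U]\oplus\F_2[U].
\]
With the two towers carrying the bigradings $(0,0)$ and $(-1,0)$ and $U$ dropping $M$ by two and $A$ by one, this is exactly $\hfk(S^3,\mathcal{U})\otimes_{\F_2}\F_2[U]$, giving equation (2). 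The same answer arises conceptually as $\hfkm(S^2\times S^1,\widetilde{\mathcal{U}})\cong\hfm(S^2\times S^1)\cong\F_2[U]\oplus\F_2[U]$, the minus homology of the ambient manifold, which is the cross-check I would use.

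The main obstacle is bookkeeping rather than geometry: I must reconcile the half-integer Maslov gradings native to $S^2\times S^1$ with the integer gradings of the knot complex and verify that everything is concentrated in Alexander grading $0$, so that the two towers land precisely in Maslov gradings $0$ and $-1$ as claimed; and I must justify carefully that setting $U_1=U_2$ realizes the knotification at the chain level, i.e. that the quotient complex genuinely computes the invariant of $\widetilde{\mathcal{U}}\subset S^2\times S^1$. Establishing that $\cflm(\mathcal{U})$ has the stated free rank-two homology, equivalently pinning down the extra rank that distinguishes the unlink from a single unknot, is the one place where I would appeal directly to the split-union structure or to a model Heegaard diagram rather than to a formal manipulation.
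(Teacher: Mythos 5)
Your proposal is correct, but it reaches the lemma by a genuinely different route than the paper. The paper's proof is diagrammatic: it exhibits an explicit doubly pointed genus-one Heegaard diagram for $\widetilde{\mathcal{U}}\subset S^2\times S^1$ with two generators, observes that the differential vanishes in both $\cfk$ and $\cfkm$ (the relevant domains are disks counted $\pm1$ by the Riemann mapping theorem), and then pins down the absolute bigradings by running $\mathcal{U}$ through the skein exact sequence against the unknot. You instead argue formally: for the first isomorphism you use that $\widetilde{\mathcal{U}}$ bounds an embedded disk, hence is a local unknot, so a K\"unneth argument gives $\hfk(S^2\times S^1,\widetilde{\mathcal{U}})\cong\hf(S^2\times S^1)$ concentrated in Alexander grading zero; for the second you identify $H_*\bigl(\cflm(\mathcal{U})/(U_1-U_2)\bigr)$ with the cokernel of $U_1-U_2$ acting on the free rank-two $\F_2[U_1,U_2]$-module $H_*(\cflm(\mathcal{U}))$, via the long exact sequence of the evident short exact sequence of complexes. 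Both steps are sound, and the inputs you lean on --- knotification invariance, the disjoint-union/connected-sum formulas, freeness of $H_*(\cflm(\mathcal{U}))$, and the normalization carrying the Maslov gradings $\pm\tfrac12$ of $\hf(S^2\times S^1)$ to $0$ and $-1$ --- are standard, though they are precisely the points you rightly flag as requiring a citation or a model diagram (the freeness claim in particular follows from the split-union formula $\cflm(\mathcal{U})\simeq\F_2[U_1]\otimes\F_2[U_2]\otimes(\F_2\oplus\F_2[-1])$, or from a four-pointed genus-zero diagram). The trade-off is that the paper's single diagram computes everything at once and avoids the half-integer bookkeeping, whereas your argument makes the $\F_2[U]$-module structure of the third term --- which is exactly what the subsequent $U$-equivariant skein induction exploits --- conceptually transparent.
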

\begin{proof}
A Heegaard diagram for $\widetilde{\mathcal{U}}\subset S^2\times S^1$ can be constructed by taking a genus one splitting of $S^2\times S^1$ with two curves, $\alpha$ and $\beta$, intersecting in two points $\x$ and $\y$. Place basepoints $z$ and $w$ inside the annular region such that $\x$ is connected to $\y$ by two disks. Since it is a genus one splitting we count only $\phi$ correspnding to domains that are disks. As an application of the Riemann mapping theorem, $\#\widehat{\mathcal{M}}(\phi)=\pm1$ for each such $\phi$. Therefore the differential is zero in both $\cfk(S^2\times S^1, \widetilde{\mathcal{U}})$ and $\cfkm(S^2\times S^1, \widetilde{\mathcal{U}})$. The relative grading difference is evident from the diagram and pinned down by the observation that the $\mathcal{U}\subset S^3$ fits into a skein exact sequence (Theorem~\ref{thm:skein}) with the unknot. 
\end{proof}

\subsection{Knot Floer homology proof}	
The main objective of this section is to show that each knot in the family $\{K_n\}$ has knot Floer homology isomorphic to $\hfk(K_0)$, and that each knot in the family $\{K_n^\tau\}$ has knot Floer homology isomorphic to $\hfk(K^\tau_0)$. Similar computations generating knots with isomorphic knot homologies occur in the work of the second author~\cite{Starkston}, Watson~\cite{Watson:IdenticalKH} and Greene and Watson~\cite{GW:Turaev}, to name a few. Theorem~\ref{thm:floergroups} is a special case of an observation originally due to Hedden. It will soon appear as part of a more general result of Hedden and Watson in~\cite{Hedden:Botany}. We include a proof only for the sake of completeness and the benefit of the reader.

\begin{theorem}~\cite{Hedden:Botany}
\label{thm:floergroups}
Let $K$ be a knot in $S^3$ formed from the band sum of a two-component unlink, and let $\{K_n\}$ denote the family of knots obtained by adding $n$ half-twists with positive crossings to the band. For all $m, a \in \Z$ and $n\geq2\in\Z$, $\hfkm_m(K_{n},a) \cong \hfkm_m(K_{n-2},a)$.
\end{theorem}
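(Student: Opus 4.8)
The plan is to run the oriented skein exact sequence of Theorem~\ref{thm:skein} on the triple $(L_+, L_-, L_0) = (K_n, K_{n-2}, \mathcal{U})$ described in Figure~\ref{fig:orientedskein}, and to show that the connecting homomorphism forces the maps $f^-$ to be isomorphisms. First I would record from Lemma~\ref{lemma:unlink} that the middle term of the sequence is
\[
	H_{m-1}\!\left(\frac{\cflm(\mathcal{U})}{U_1-U_2},a\right) \cong \big(\hfk(S^3,\mathcal{U})\otimes_{\F_2}\F_2[U]\big)_{m-1,a},
\]
which is supported only in Alexander grading $a \le 0$ (since $U$ lowers $A$ and the $\hfk$ of the unlink sits in $a=0$). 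The key structural input is that $K_n$ is slice for every $n$ (Lemma~\ref{lemma:tauands} and Figure~\ref{fig:knotcobordism}), so $\tau(K_n)=0$; combined with the fact that $\hfkm$ of a knot is always supported in Alexander gradings bounded by the Seifert genus, this pins down enough of the structure of $\hfkm(K_n)$ to compare it against the unlink term.

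Next I would exploit the grading shift built into the oriented skein move. Adding two positive crossings shifts the relevant gradings so that $\hfkm_m(K_n,a)$ and $\hfkm_m(K_{n-2},a)$ sit in the sequence separated by the unlink contribution, which is concentrated in nonpositive Alexander gradings. The plan is to argue that for $a$ sufficiently large — specifically above the support of the unlink term — the connecting maps $g^-$ and $h^-$ vanish for grading reasons, so $f^-\colon\hfkm_m(K_n,a)\to\hfkm_m(K_{n-2},a)$ is already an isomorphism. For the remaining low Alexander gradings one cannot conclude this crossing by crossing, so instead I would use the $U$-equivariance of the entire long exact sequence (emphasized in the remark after Theorem~\ref{thm:skein}): since the connecting term is a free $\F_2[U]$-module and all maps commute with the $U$-action, the behavior in high Alexander grading propagates downward under $U$. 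Concretely, multiplication by a high power of $U$ identifies the low-grading pieces with high-grading pieces where the isomorphism is already established, and $U$-equivariance then transports the isomorphism $f^-$ to all gradings.

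The main obstacle I expect is the bookkeeping in the overlap region where the unlink term is genuinely nonzero: there the three maps $f^-, g^-, h^-$ interact, and one must rule out the possibility that $f^-$ has a kernel or cokernel that is compensated by the unlink contribution rather than being zero. The cleanest way around this is to not prove $f^-$ is an isomorphism directly, but rather to first establish that the total dimensions of $\hfkm(K_n)$ and $\hfkm(K_{n-2})$ agree in each $(m,a)$, and then upgrade. Here the slice hypothesis does real work: because $\tau(K_n)=\tau(K_{n-2})=0$ and both knots bound the same kind of slice disk, the generator of $\hfm$ of $S^3$ must survive in the same way in both complexes, which fixes the image of the unlink term under $h^-$ uniformly in $n$. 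This forces the connecting maps to have $n$-independent rank, and since the unlink term itself is $n$-independent, an induction on $n$ (with base cases computed directly, as confirmed for $K_0,K_1$ in Table~\ref{table:hfkandkh}) yields $\hfkm_m(K_n,a)\cong\hfkm_m(K_{n-2},a)$ for all $m,a$.

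Alternatively, and perhaps more robustly, I would phrase the argument entirely at the mapping-cone level rather than after passing to homology. Since the cone of $f\colon\cfkm(K_n)\to\cfkm(K_{n-2})$ is quasi-isomorphic to $\cflm(\mathcal{U})/(U_1-U_2)$, and the latter is $n$-independent, I would show that the $U$-module structure of this fixed cone, together with the vanishing of $\tau$, determines $f$ up to quasi-isomorphism and forces it to be a quasi-isomorphism in the relevant range. This reduces the whole theorem to understanding a single, explicit, $n$-independent object, which I view as the conceptually correct formulation and the one least prone to grading errors.
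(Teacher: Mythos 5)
Your overall strategy---the oriented skein triple $(K_n,K_{n-2},\mathcal{U})$, the $U$-equivariance of the long exact sequence, the vanishing of $\tau$ from sliceness, and induction with computed base cases---is exactly the paper's. But the central step, handling the Alexander--Maslov bigradings where the unlink term is nonzero, has a genuine gap. Your claim that ``multiplication by a high power of $U$ identifies the low-grading pieces with high-grading pieces where the isomorphism is already established'' is false: $\hfkm_m(K,a)$ vanishes for $a$ large (above the Seifert genus), and $U$ acts neither injectively nor surjectively on the torsion part of $\hfkm$, so nothing can be ``transported downward'' this way. The actual role of $U$-equivariance in the paper is different and more pointed: the unlink term is a \emph{free} $\F_2[U]$-module on generators $z,z'$ in bigradings $(0,0)$ and $(-1,0)$, so the element $U^d\cdot z$ is non-torsion and therefore cannot lie in the image of a map out of a torsion module. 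What makes this bite is the ingredient your proposal gestures at but never extracts: the characterization of $\tau$ via $\hfkm$ (from the appendix of the Ozsv\'ath--Szab\'o Legendrian paper) shows that $\tau(K_n)=0$ forces the unique free $\F_2[U]$-summand of $\hfkm(K_n)$ to be generated in bigrading exactly $(0,0)$, hence $\hfkm_{1-2d}(K_{n-2},-d)$ is pure torsion. That is what kills $g^-$, makes $f^-$ an isomorphism and $h^-$ injective onto the span of a non-torsion element, and dually forces $j^-$ to be surjective so that $\ell^-$ is an isomorphism and the remaining middle map matches torsion submodules. Without pinning down the bigrading of the non-torsion tower, ``$\tau=0$ pins down enough of the structure'' is not an argument.

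Your two fallback routes do not repair this. Knowing only that the connecting maps have $n$-independent rank does not by itself yield $\dim\hfkm_m(K_n,a)=\dim\hfkm_m(K_{n-2},a)$ (the Euler-characteristic bookkeeping of the six-term piece leaves the rank of $h^-$ undetermined unless you locate the non-torsion generators as above), and ``the slice disks fix the image of the unlink term uniformly'' is an assertion, not a proof. Likewise, the mapping-cone reformulation cannot work as stated: a map is not determined up to quasi-isomorphism by its cone, and $f$ is certainly not a quasi-isomorphism globally since its cone has the nontrivial homology of the unlink complex. The fix is simply to carry out the diagram chase the paper does, using the free-module structure of the unlink term together with the $\tau$-determined location $(2\tau(m(K_n)),\tau(m(K_n)))=(0,0)$ of the non-torsion generator $\xi_n$.
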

\begin{proof}
The proof is by induction on $n$. Just as with the specific families of knots described above, $K_n$ fits into the skein triple $(K_n, K_{n-2}, \mathcal{U})$. Theorem~\ref{thm:skein} applied to the skein triple gives a long exact sequence
\[
	\cdots\rightarrow \hfkm_m(K_n,a)\stackrel{f^-}{\longrightarrow}\hfkm_{m}(K_{n-2},a) \stackrel{g^-}{\longrightarrow}H_{m-1} \left( \frac{\cflm(\mathcal{U})}{U_1-U_2},a \right) \stackrel{h^-}{\longrightarrow}\hfkm_{m-1}(K_n,a)\rightarrow\cdots .
\]

We will use this sequence in conjunction with information coming from the $\tau$ invariant. By Lemma~\ref{lemma:tauands}, $\tau(K_n)=0$ $\forall n$. Because we are working with $\hfkm(K)$, we will use the definition of $\tau$ appearing in~\cite[Appendix]{OS:Legendrian}, where $m(K)$ denotes the mirror of $K$.
\[ 
	\tau(m(K)) = \max \{a\;|\;\exists \; \xi\in\hfkm(K,a) \text{ such that } U^d\xi\neq0\text{ for all integers } d\geq 0 \}.
\]
Moreoever, for a homogeneous element $\xi\in\hfkm(K,\tau(m(K))$ such that $U^d\xi\neq0\;\forall d\geq 0$, the Maslov grading of $\xi$ is given by $m=2\tau(m(K))$. This fact can be verified by following the argument given in~\cite[Appendix]{OS:Legendrian}, keeping careful track of the bigrading shifts at each step.
Since $\tau(K_n)=0$, we have the additional fact that $\tau(K_n)=\tau(m(K_n))$. 

The non-torsion summand of $\hfkm(K_n)$ is generated by an element $\xi_n$ with maximal bigrading $(2\tau(m(K)), \tau(m(K))$, which in this case is $(0,0)$. 
The third term $H_* \left( \frac{\cflm(L_0)}{U_1-U_2},0 \right)$ of the skein triple corresponds with the two-component unlink and is freely generated over $\F_2[U]$ by elements $z$ and $z'$ in bigradings $(0,0)$ and $(-1,0)$. Since $\hfkm(\mathcal{U})$ is supported entirely in bigradings $(-2d,-d)$ and $(-2d-1,-d)$ the long exact sequence immediately supplies isomorphisms $\hfkm_m(K_n,a)\cong \hfkm_m(K_{n-2},a)$ whenever $a=-d\leq0$ and $|m-2a| > 1$ or when $a>0$. The $U$-equivariant long exact sequence for the remaining case is displayed below, parameterized by $d\geq0$.\\

\centerline{
	\xymatrix @R-1.5pc @C-1.25pc{ 
	0 \ar[r] & \hfkm_{1-2d}(K_n,-d) \ar[r]^{f^-} & \hfkm_{1-2d}(K_{n-2},-d) \ar[r]^>>>>>{g^-} & {\F_2}_{\{-2d,-d\}} \ar[r]^{h^-} \ar@{} [d] |{\rin}&  \hfkm_{-2d}(K_n,-d) \ar[r]^<<<<<{i^-} \ar@{} [d] |{\rin} & \\
	& & & U^d\cdot z \ar@{|->}[r]  &U^d\cdot \xi_n +\eta &&& \\
	& \hfkm_{-2d}(K_{n-2},-d) \ar[r]^>>>>{j^-} \ar@{} [d] |{\rin} & {\F_2}_{\{-1-2d,-d\}}  \ar[r]^>>>>{k^-} \ar@{} [d] |{\rin} & \hfkm_{-1-2d}(K_n,-d) \ar[r]^{\ell^-} & \hfkm_{-1-2d}(K_{n-2},-d) \ar[r] & 0 \\
	 & U^d\cdot \xi_{n-2} \ar@{|->}[r]  & U^d\cdot  z'
	}
}	
In the diagram above, equivariance of the long exact sequence with respect to the action of $U$ implies that $U^d\cdot z$ cannot be in the image of any $\F_2[U]$-torsion element. Since $\hfkm_{1-2d}(K_{n-2},-d)$ is torsion, $U^d\cdot z$ is not in the image of $g^-$, and the map $g^-=0$. Exactness implies that $f^-$ is an isomorphism, and also that $h^-$ is an injection. Since the map $h^-$ is degree preserving, $U^d\cdot z$ maps to a non-torsion element $U^d\cdot \xi_n +\eta \in \hfkm_{-2d}(K,-d)$, where $\eta$ is $\F_2[U]$-torsion.  By exactness, $U^d\cdot \xi_n+\eta \in\Ker i^-$. Because the non-torsion summand gets mapped to zero by $i^-$, $U^d\cdot \xi_{n-2}$, which is also non-torsion, is not in the image of $i^-$. By exactness, $U^d \cdot \xi_{n-2}\not\in\Ker j^-$ and $U^d\cdot \xi_{n-2}$ must map to $U^d\cdot z'$. Exactness implies that $k^-=0$ and $\ell^-$ is an isomorphism. What remains is an isomorphism of torsion submodules at $i^-$. Hence, for all $(m,a)$, $\hfkm_m(K_n,a)\cong \hfkm_m(K_{n-2},a)$.
\end{proof}
\begin{corollary}
Let $\{K_n\}$ and $\{K_n^\tau\}$ denote the infinite family of knots derived from $14^n_{22185}$ and $14^n_{22589}$. Then
	\begin{eqnarray*}
		\hfk_m(K_n,a) &  \cong & \hfk_m(K_0,a)\\
		\hfk_m(K^\tau_n,a) & \cong & \hfk_m(K^\tau_0,a).
	\end{eqnarray*}
\end{corollary}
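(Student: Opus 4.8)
The plan is to bootstrap the corollary from Theorem~\ref{thm:floergroups}, which handles a single step $n \mapsto n-2$ at the level of the minus flavor $\hfkm$, into a statement about the hat flavor $\hfk$ for the whole family. First I would note that both $K_0 = 14^n_{22185}$ and $K_0^\tau = 14^n_{22589}$ are band sums of a two-component unlink, and that the families $\{K_n\}$ and $\{K_n^\tau\}$ arise from them by adding positive half-twists to the band; hence Theorem~\ref{thm:floergroups} applies to each family, giving $\hfkm_m(K_n,a) \cong \hfkm_m(K_{n-2},a)$ for all $n \geq 2$ and similarly for $K_n^\tau$. Iterating this downward by two reduces every even-indexed knot to $K_0$ and every odd-indexed knot to $K_1$, so we obtain $\hfkm(K_n) \cong \hfkm(K_0)$ for $n$ even and $\hfkm(K_n) \cong \hfkm(K_1)$ for $n$ odd, with the analogous statements for the mutant family.

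The crucial feature to extract is that these are isomorphisms of $\F_2[U]$-modules, not merely of bigraded groups. The isomorphisms $f^-$ and $\ell^-$ realizing them in the proof of Theorem~\ref{thm:floergroups} are drawn from the $U$-equivariant long exact sequence of Theorem~\ref{thm:skein}, so they commute with the $U$-action; together with the torsion isomorphism at $i^-$ they match the full $\F_2[U]$-module structures of $\hfkm(K_n)$ and $\hfkm(K_{n-2})$ bidegree by bidegree. The hat group is recovered functorially from this module structure through the short exact sequence of complexes $0 \to \cfkm \xrightarrow{U} \cfkm \to \cfk \to 0$: its long exact sequence expresses $\hfk(K)$ in terms of $\ker U$ and $\mathrm{coker}\,U$ on $\hfkm(K)$, so any $\F_2[U]$-module isomorphism $\hfkm(K_n) \cong \hfkm(K_{n-2})$ descends to $\hfk(K_n) \cong \hfk(K_{n-2})$. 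This gives $\hfk(K_n) \cong \hfk(K_0)$ for even $n$ and $\hfk(K_n) \cong \hfk(K_1)$ for odd $n$, and likewise for $\{K_n^\tau\}$.

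To merge the two parity classes I would invoke the direct computation recorded in the caption of Table~\ref{table:hfkandkh}, namely $\hfk(K_0) \cong \hfk(K_1)$ and $\hfk(K_0^\tau) \cong \hfk(K_1^\tau)$; combined with the parity-separated isomorphisms this yields $\hfk_m(K_n,a) \cong \hfk_m(K_0,a)$ and $\hfk_m(K_n^\tau,a) \cong \hfk_m(K_0^\tau,a)$ for every $n$. I expect the main obstacle to be the passage from $\hfkm$ to $\hfk$: one must genuinely carry the $U$-equivariance through the skein sequence and track the bigrading shift in the $U$-exact triangle, since the hat homology is determined by the module structure of $\hfkm$ rather than by its underlying bigraded group alone. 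The reliance on a finite computation to bridge the parity gap is an unavoidable but harmless ingredient, as the oriented skein relation only connects indices of the same parity.
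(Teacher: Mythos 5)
Your proposal matches the paper's argument: the paper likewise iterates Theorem~\ref{thm:floergroups}, passes from $\hfkm$ to $\hfk$ via the $U$-exact triangle (phrased there as an application of the five lemma to the long exact sequences coming from $0 \to \cfkm \xrightarrow{U} \cfkm \to \cfk \to 0$), and closes the parity gap with the computed base cases $\hfk(K_0)\cong\hfk(K_1)$ and $\hfk(K_0^\tau)\cong\hfk(K_1^\tau)$ from Table~\ref{table:hfkandkh}. Your $\ker U$/$\mathrm{coker}\,U$ description of the descent is just that five-lemma step made explicit, so the two proofs are essentially the same.
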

\begin{proof}
Once a suitable base case has been established, then the result follows from relating $\hfkm(K_n)$, $\hfkm(K_{n-2})$, $\hfk(K_n)$ and $\hfk(K_{n-2})$ by the five lemma. There are four distinct families in our investigation, with base cases $K_0, K_1, K^\tau_0$ and $K_1^\tau$, for even and odd values of $n$. The hat-version $\hfk$ of each has been verified computationally with the program of Droz~\cite{Droz:Program}. $\hfk(K_1)$ and $\hfk(K^\tau_1)$ have been found to be isomorphic with $\hfk(K_0)$ and $\hfk(K_0^\tau)$, respectively (see Table~\ref{table:hfkandkh}).
%
\end{proof}

This verifies that $\{K_n\}$, $n\in\Z^+$, is an infinite family of knots admitting a distinct genus two mutant of the same total dimension in knot Floer homology.

\section{Khovanov Homology}
\label{sec:Khovanov}

Khovanov homology is a bigraded homology knot invariant introduced in \cite{Khovanov:Categorification}. The chain complex and differential of the homology theory are computed combinatorially from a knot diagram using the cube of smooth resolutions of the crossings. See \cite{BarNatan:OnKhovanov} for an introduction to the theory. Here, we compute the Khovanov homology of $K_n$ and $K_n^{\tau}$ over rational coefficients. While our computation of Heegaard Floer homology was over coefficients in $\F_2$, we need to work over $\Q$ to obtain the corresponding results in Khovanov homology. This is for two reasons. First, Rasmussen's invariant and Lee's spectral sequence are only applicable to Khovanov homology with rational coefficients, and we require these tools for the computation. Furthermore, Khovanov homology over $\F_2$ coefficients is significantly weaker at distinguishing mutants in the following sense. Bloom and Wehrli independently proved that Khovanov homology over $\F_2$ is invariant under Conway mutation in \cite{Bloom}, \cite{Wehrli:Mutation}. While these pairs are not Conway mutants, we can compute that $K_0$ and $K_0^{\tau}$ have the same $\F_2$-Khovanov homology (though we have not proven this for the infinite family). The goal of this section is to provide an infinite family of genus $2$ mutants where the bigraded rational Khovanov homology distinguishes between the knot and its mutant, whereas the total dimension of the Khovanov homology is invariant under the mutation. Our main result in this section is the following theorem.

\begin{theorem}
The Khovanov homology with rational coefficients for $K_n$ respectively $K_n^{\tau}$, for $n\geq 8$ is described by the following sequences of numbers. Here $\mathbf{R}^i_j$ denotes that the Khovanov homology in homological grading $i$ and quantum grading $j$ has dimension $\mathbf{R}$. (This notation originated in \cite{BarNatan:OnKhovanov}.)
\begin{eqnarray*}
Kh(K_n)&=& \one^0_{-1} \one^0_1\, \one^{n-7}_{2n-13} \one^{n-6}_{2n-9} \one^{n-4}_{2n-7} \one^{n-3}_{2n-7} \one^{n-3}_{2n-3}  \one^{n-2}_{2n-5} \one^{n-2}_{2n-3} \one^{n-1}_{2n-3} \one^{n-1}_{2n-1}\one^{n}_{2n-3}\one^{n}_{2n-1} \one^{n}_{2n+1}\\
&&\mathbf{2}^{n+1}_{2n+1} \one^{n+1}_{2n+3} \one^{n+2}_{2n+1} \one^{n+2}_{2n+3} \one^{n+2}_{2n+5}
\one^{n+3}_{2n+3} \one^{n+3}_{2n+5} \one^{n+3}_{2n+7} \one^{n+4}_{2n+7} \one^{n+5}_{2n+7} \one^{n+6}_{2n+11}
\end{eqnarray*}

\begin{eqnarray*}
Kh(K_n^{\tau})&=& \one^0_{-1} \one^0_1\, \one^{n-7}_{2n-13} \one^{n-6}_{2n-9} \one^{n-5}_{2n-9} \one^{n-4}_{2n-9} \one^{n-4}_{2n-7}  \one^{n-4}_{2n-5} \one^{n-3}_{2n-7} \one^{n-3}_{2n-5} \one^{n-3}_{2n-3}\one^{n-2}_{2n-5}\mathbf{2}^{n-2}_{2n-3} \one^{n-1}_{2n-3}\\
&& \one^{n-1}_{2n-1} \one^{n-1}_{2n+1} \one^{n}_{2n-1} \one^{n}_{2n+1} \one^{n+1}_{2n+1}
\one^{n+1}_{2n+3} \one^{n+2}_{2n+1} \one^{n+2}_{2n+5} \one^{n+3}_{2n+5} \one^{n+5}_{2n+7} \one^{n+6}_{2n+11}
\end{eqnarray*}
\label{KhovanovComputation}
\end{theorem}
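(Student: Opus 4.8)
The plan is to compute $\kh(K_n)$ by induction on $n$, using the unoriented skein long exact sequence attached to the triple $(K_n, K_{n-1}, \mathcal{U})$ of Figure~\ref{fig:unorientedskein}, together with the fact that every $K_n$ is slice. First I would write down the rational Khovanov long exact sequence for an unoriented resolution, tracking both grading shifts carefully; since the resolved crossing lies in a twist region of $n$ positive crossings, these shifts depend on $n$ through the writhe (equivalently, through $n_+$ and $n_-$ in the diagram used to normalize the bigrading). The second input is the homology of the third term: $\mathcal{U}$ is the two-component unlink, so $\kh(\mathcal{U}) \cong A \otimes_\Q A$ with $A = \kh(\text{unknot})$, a four-dimensional group concentrated in homological grading $0$.

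The structural observation driving the induction is that, because $\kh(\mathcal{U})$ is supported in a single homological grading, the long exact sequence degenerates into a string of isomorphisms $\kh^{i}_{j}(K_n) \cong \kh^{i'}_{j'}(K_{n-1})$ in every homological grading outside a bounded window, plus one short exact segment of four or five terms in which the unlink contributes. This is precisely what produces the stated formula: the bulk of $\kh(K_n)$ is a copy of $\kh(K_{n-1})$ shifted up by $(1,2)$ in $(i,j)$ per added twist, which accounts for the uniform $n$-dependence of the exponents $n-7, 2n-13, \dots$, while all genuinely new behavior is confined to the short segment. I would resolve that segment in two steps. The graded Euler characteristic of $\kh(K_n)$ is the unnormalized Jones polynomial, which I can compute independently and inductively from the Kauffman bracket skein relation on the same triple; this fixes the alternating sum of dimensions in each quantum grading. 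What the Euler characteristic cannot see is whether an adjacent pair of classes genuinely survives or cancels, and here I would invoke Lee's deformation: the Lee spectral sequence converges to a two-dimensional group, and since $s(K_n)=0$ by Lemma~\ref{lemma:tauands}, the two survivors sit in bigradings $(0,\pm 1)$. This forces the summand $\one^0_{-1}\,\one^0_1$ to persist for every $n$ and, combined with the Euler characteristic count, fixes the ranks of the connecting maps in the segment.

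For the base of the induction I would compute $\kh(K_8)$ directly with JavaKH-v2~\cite{GM:Program}; the hypothesis $n \geq 8$ is exactly the range in which the moving window of the homology has separated from the fixed Lee classes at grading $0$, so that the skein sequence splits cleanly and the inductive pattern stabilizes. The mutant family $\{K_n^\tau\}$ is handled identically, using the same unoriented triple $(K_n^\tau, K_{n-1}^\tau, \mathcal{U})$ and the vanishing of $s(K_n^\tau)$.

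The step I expect to be the main obstacle is resolving the short exact segment, that is, determining for each $n$ exactly which connecting maps vanish and which are isomorphisms. The Euler characteristic only constrains alternating sums, so on its own it cannot distinguish a surviving pair of classes in adjacent homological gradings from a canceling one; the $s$-invariant rescues the two Lee survivors but does not by itself compute the remaining differentials. Matching the precise bigradings — in particular locating the two-dimensional groups $\mathbf{2}^{n+1}_{2n+1}$ and $\mathbf{2}^{n-2}_{2n-3}$ and verifying that the homological gradings align exactly across the induction — will demand careful bookkeeping of the $n$-dependent shifts and a correspondingly careful analysis of how the connecting homomorphism interacts with the Lee deformation.
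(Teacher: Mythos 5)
Your proposal follows essentially the same route as the paper: the unoriented skein long exact sequence on $(K_n,K_{n-1},\mathcal{U})$ with careful tracking of the $n$-dependent normalization shifts, the observation that $\kh(\mathcal{U})$ being concentrated in homological grading $0$ reduces everything to a string of $(1,2)$-shifted isomorphisms plus one short segment, a JavaKh base case at $n=8$, and Lee's spectral sequence together with $s(K_n)=0$ to kill the ambiguous classes in gradings $0$ and $1$. The only deviation is your appeal to the graded Euler characteristic, which is redundant (exactness of the five-term segment already bounds the ambiguity to two parameters $a,b\in\{0,1\}$); the one detail you gesture at but do not spell out --- that for $n$ large the adjacent rows of the $E_1$ page vanish so the extra classes at $(1,1)$ and $(0,3)$ could neither die nor be hit, forcing $a=b=0$ (and requiring a second computational base case at $n=9$) --- is exactly how the paper closes the argument.
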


The key aspect of this computation to note for the proof is that as $n$ increases by $1$, in all but the first two terms the homological grading increases by $1$ and the quantum grading increases by $2$. The first part of the proof will justify the computation for all but the first two terms. The second part of the proof justifies the computation of the first two terms. Before we give the proof of the computation, the following corollary highlights the relevant conclusions.

\begin{corollary} For all $n\geq 0$,
$$Kh(K_n)\not\cong Kh(K_n^{\tau})$$
as bigraded groups, and
$$Kh^{\delta}(K_n)\not\cong Kh^{\delta}(K_n^{\tau})$$
however
$$\dim(Kh(K_n))=\dim(Kh(K_n^{\tau}))=26.$$
\end{corollary}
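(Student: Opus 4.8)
The plan is to read all three assertions directly off the explicit bigraded data supplied by Theorem~\ref{KhovanovComputation}, treating the range $n\geq 8$ first and then disposing of the finitely many small cases separately. Nothing beyond careful bookkeeping is required once the computation in that theorem is granted, so the proof is short; the only genuine gap is the range not covered by the theorem, which I address at the end.

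First I would settle the dimension count. Each of the two displayed sequences lists exactly twenty-five symbols $\mathbf{R}^i_j$, of which precisely one carries the coefficient $\mathbf{2}$ (namely $\mathbf{2}^{n+1}_{2n+1}$ for $K_n$ and $\mathbf{2}^{n-2}_{2n-3}$ for $K_n^{\tau}$) while the remaining twenty-four carry coefficient $\one$. Summing therefore gives $\dim\kh(K_n)=\dim\kh(K_n^{\tau})=24+2=26$, independently of $n$. For the bigraded distinction it suffices to exhibit a single bigrading on which the two groups disagree. The homological gradings appearing in $\kh(K_n)$ are $0$ together with $n-7,n-6,n-4,n-3,n-2,n-1,n,n+1,\dots,n+6$; in particular no generator lies in homological grading $n-5$, so $\kh^{\,n-5}_{q}(K_n)=0$ for every $q$. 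By contrast $\kh(K_n^{\tau})$ contains the generator $\one^{n-5}_{2n-9}$, whence $\kh^{\,n-5}_{2n-9}(K_n^{\tau})\cong\Q\neq 0$. This proves $\kh(K_n)\not\cong\kh(K_n^{\tau})$ as bigraded groups.

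For the $\delta$-graded statement, recall $\delta=q-2i$. The structural observation highlighted immediately after Theorem~\ref{KhovanovComputation} is that, apart from the two fixed generators $\one^0_{-1},\one^0_1$, replacing $n$ by $n+1$ increases every homological grading by $1$ and every quantum grading by $2$; since $\delta=q-2i$ is invariant under the simultaneous shift $(i,q)\mapsto(i+1,q+2)$, the full $\delta$-graded dimension vector is independent of $n$ for $n\geq 8$. It then remains only to evaluate $q-2i$ on each of the twenty-five symbols of one family and collect terms. For $\kh(K_n)$ this produces the dimensions $4,11,9,2$ in $\delta$-gradings $-3,-1,1,3$, and for $\kh(K_n^{\tau})$ it produces $2,9,11,4$; both agree with the $n=0$ entries of Table~\ref{table:hfkandkh}. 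The two vectors are unequal, so $\kh^{\delta}(K_n)\not\cong\kh^{\delta}(K_n^{\tau})$, and because one vector is the reversal of the other we obtain the sharper symmetry $\kh_{\delta}(K_n)\cong\kh_{-\delta}(K_n^{\tau})$ claimed in Theorem~\ref{thm:main}(4).

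The main (and essentially only) obstacle is that Theorem~\ref{KhovanovComputation} is stated only for $n\geq 8$, whereas the corollary asserts all three conclusions for every $n\geq 0$. The bound $n\geq 8$ exists precisely to prevent the closed-form gradings from colliding with one another or with the two fixed low-degree generators, so the cases $0\leq n\leq 7$ fall outside its scope and must be checked directly. I would dispatch them with the same rational Khovanov homology computation (JavaKH-v2) that generated the table: the pair $n=0$ already appears in Table~\ref{table:hfkandkh}, exhibiting total dimension $26$, the bigraded distinction, and the reversed $\delta$-graded vectors, and the remaining six values are handled identically. Once these finitely many base computations are recorded, the three conclusions hold for all $n\geq 0$.
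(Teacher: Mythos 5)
Your proposal is correct and follows essentially the same route as the paper: read the total dimension, the distinguishing bigrading (homological grading $n-5$, quantum grading $2n-9$ — the very example the paper uses), and the $\delta$-graded dimension vectors directly from Theorem~\ref{KhovanovComputation} for $n\geq 8$, then verify the cases $0\leq n\leq 7$ computationally with JavaKh-v2. The arithmetic in your $\delta$-grading tallies checks out against Table~\ref{table:hfkandkh}, so no gaps remain.
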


\begin{proof}[Proof of corollary]
For $n\geq 8$ it is clear from the theorem that the bigraded Khovanov homology over $\Q$ of $K_n$ and $K_n^{\tau}$ differ. For example $K_n$ has dimension zero in homological grading $n-5$, quantum grading $2n-9$ while $K_n^{\tau}$ has dimension $1$ in that grading.

The $\delta$ graded groups can be easily computed from the theorem. The $\delta$-gradings are supported in $\delta=-3,-1,1,3$. For any value of $n$, $Kh^{\delta}(K_n)$ agrees $Kh^{\delta}(K_0)$ and $Kh^{\delta}(K_n^\tau)$ agrees with $Kh^{\delta}(K_0^\tau)$, as given in table \ref{table:hfkandkh}. In particular $Kh^{\delta}$ distinguishes $K_n$ from $K_n^\tau$.

The total dimension of the Khovanov homology in each case is $26$, and can be computed by summing the dimensions over all bidegrees.

For the finitely many cases where $0\leq n\leq 7$ this result has been computationally verified using Green and Morrison's program JavaKh-v2 \cite{GM:Program}. 
\end{proof}

\begin{proof}[Proof of theorem \ref{KhovanovComputation}]

The method of computing Khovanov homology we use here was previously used in \cite{Starkston} to find the Khovanov homology of $(p,-p,q)$ pretzel knots. The reader may refer to that paper or the above cited sources for further background and detail.

There is no difference in the proof for $K_n$ versus $K_n^{\tau}$. We will write $K_n$ throughout the proof, but all statements in the proof hold for $K_n^{\tau}$ as well.

There is a long exact sequence whose terms are given by the unnormalized Khovanov homology of a knot diagram and its $0$ and $1$ resolutions at a particular crossing. The unnormalized Khovanov homology is an invariant of a specific diagram, not of a particular knot. It is given by taking the homology of the appropriate direct sum in the cube of resolutions before making the overall grading shifts. Let $n_+$ denote the number of positive crossings in a diagram and $n_-$ the number of negative crossings. Let $[ \cdot ]$ denote a shift in the homological grading and $\{ \cdot \}$ denote a shift in the quantum grading such that $\Qsub{q}\{k\}=\Qsub{q+k}$ and such that $Kh(K)[k]$ has an isomorphic copy of $Kh^i(K)$ in homological grading $i+k$ for each $i$. \footnote{There is some discrepancy in the literature regarding the notation for grading shifts. The notation in this paper agrees with that of Bar-Natan's introduction \cite{BarNatan:OnKhovanov}, though it is the opposite of that used in Khovanov's original paper \cite{Khovanov:Categorification}. Negating all signs relating to grading shifts will give Khovanov's original notation.} 

Let $\unkh(D)$ denote the unnormalized Khovanov homology of a knot diagram $D$. Then
	$$Kh(D)=\unkh(D)[-n_-]\{n_+-2n_-\}.$$
If $D$ is a diagram of a knot, $D_0$ is the diagram where one crossing is replaced by its $0$-resolution and $D_1$ is the diagram where that crossing is replaced by its $1$-resolution. Then, we have the following long exact sequence (whose maps preserve the $q$-grading)

\begin{equation}
	\cdots \goto \unkh^{i-1}(D_1)\{1\} \goto \unkh^{i}(D) \goto \unkh^i(D_0) \goto \unkh^i(D_1)\{1\} \goto \cdots.
\label{LES}
\end{equation}

Let $D,D_0$ and $D_1$ be the diagrams for $K_n$ and its resolutions $\mathcal{U}$ and $K_{n-1}$ as shown in Figure \ref{fig:unorientedskein}. Observe that $D_0$ is a diagram for the two component unlink $\mathcal{U}$ with $6+n$ positive crossings and $7$ negative crossings. $D_1$ is a diagram for $K_{n-1}$ with $6+n$ positive crossings and $7$ negative crossings and $D$ is a diagram for $K_n$ with $7+n$ positive crossings and $7$ negative crossings. Therefore we have the following identifications
\begin{eqnarray*}
	\unkh(D_1)[-7]\{n-8\} &= &Kh(K_{n-1})\\
	\unkh(D_0)[-7]\{n-8\} &=& Kh(\mathcal{U})\\
	\unkh(D)[-7]\{n-7\}&=&Kh(K_n).
\end{eqnarray*}

Note that the Khovanov homology of the two component unlink is $Kh^0(\mathcal{U})=\Qsub{-2}\oplus \Qsub{0}^2\oplus \Qsub{2}$ and $Kh^i(\mathcal{U})=0$ for $i\neq 0$. After applying appropriate shifts we obtain $\unkh(D_0)$. We will inductively assume the computation in the theorem holds for $K_{n-1}$. The base case is established by computing $Kh(K_8)$ using the JavaKh-v2 program \cite{GM:Program}. Applying the appropriate shifts from above we thus get the value for $\unkh(D_1)$. Plugging this into the long exact sequence (\ref{LES}) gives the following exact sequences
\begin{equation}
	0\goto Kh^{i-8}(K_{n-1})\{8-n\}\{1\} \goto Kh^{i-7}(K_n)\{7-n\} \goto 0
\label{isomexact}
\end{equation}
for $i\neq 7,8$, and
\begin{eqnarray*}
	0\goto Kh^{-1}(K_{n-1})\{9-n\} \goto Kh^0(K_n)\{7-n\} \goto \Qsub{6-n}\oplus \Qsub{8-n}^2\oplus \Qsub{10-n} \goto \\
	\goto Kh^0(K_{n-1})\{9-n\} \goto Kh^1(K_n)\{7-n\} \goto 0 
\end{eqnarray*}
which by the inductive hypothesis is the same as
\begin{subequations}
	\begin{align}
		0\goto 0 \goto Kh^0(K_n)\{7-n\} \goto \Qsub{6-n}\oplus \Qsub{8-n}^2\oplus \Qsub{10-n} \goto \Qsub{8-n}\oplus \Qsub{10-n} \goto \tag{7} \label{exactsequence} \\
		\goto Kh^1(K_n)\{7-n\}\goto 0. \notag
	\end{align}
\end{subequations}

Exactness of line (\ref{isomexact}) yields isomorphisms
	$$Kh^{j-1}(K_{n-1})\{2\} \isom Kh^j(K_n)$$
for all $j\neq 0,1$. Inspecting the way the formula for $Kh(K_n)$ in the theorem depends on $n$, one can see that the inductive hypothesis verifies the computation for $Kh^j(K_n)$ for $j\neq 0,1$.

Exactness of line (\ref{exactsequence}) gives a few possibilities. Analyzing the sequence we must have
\begin{eqnarray*}
	Kh^0(K_n)&=&\Qsub{-1}\oplus \Qsub{1}^{1+a} \oplus \Qsub{3}^b\\
	Kh^1(K_n)&=&\Qsub{1}^a\oplus \Qsub{3}^b\\
\text{ where } a,b\in \{0,1\}.
\end{eqnarray*}

Now we use the fact that $s(K_n)$ vanishes by Lemma \ref{lemma:tauands}. Since $s(K_n)=0$, the spectral sequence given by Lee in \cite{Lee:Endomorphism} converges to two copies of $\Q$, each in homological grading $0$, with one in quantum grading $-1$ and the other in quantum grading $1$, as proven by Rasmussen in \cite{Rasmussen:Khovanov}. Note that the $r^{th}$ differential goes up $1$ and over $r$, because of an indexing that differs from the standard indexing for a spectral sequence induced by a filtration. (See the note in section 3.1 of \cite{Starkston} for further explanation). Let $d_r^{p,q}$ denote the differential on the $r^{th}$ page from $E_r^{p,q}$ to $E_r^{p+1,q+r}$ in Lee's spectral sequence. Here $p$ is the coordinate for the homological grading shown on the vertical axis and $q$ is the coordinate for the quantum grading shown on the horizontal axis.

\begin{table}
\resizebox{6.2in}{!}{
\begin{tabular}{|r||c|c|c|c|c|c|c|c|c|c|c|c|c|c|c|c|c|c|}
\hline
n+6&&&&&&&&&&&&&&&&&1\\\hline
n+5&&&&&&&&&&&&&&&1&&\\\hline
n+4&&&&&&&&&&&&&&&1&&\\\hline
n+3&&&&&&&&&&&&&1&1&1&&\\\hline
n+2&&&&&&&&&&&&1&1&1&&&\\\hline
n+1&&&&&&&&&&&&2&1&&&&\\\hline
n&&&&&&&&&&1&1&1&&&&&\\\hline
n-1&&&&&&&&&&1&1&&&&&&\\\hline
n-2&&&&&&&&&1&1&&&&&&&\\\hline
n-3&&&&&&&&1&&1&&&&&&&\\\hline
n-4&&&&&&&&1&&&&&&&&&\\\hline
n-5&&&&&&&&&&&&&&&&&\\\hline
n-6&&&&&&&1&&&&&&&&&&\\\hline
n-7&&&&&1&&&&&&&&&&&&\\\hline
\vdots&&&&&&&&&&&&&&&&&\\\hline
1&&a&b&&&&&&&&&&&&&&\\\hline
0&1&1+a&b&&&&&&&&&&&&&&\\\hline
 &-1&1&3&$\cdots$&m-13&m-11&m-9&m-7&m-5&m-3&m-1&m+1&m+3&m+5&m+7&m+9&m+11\\\hline
\end{tabular}
}
\caption{Here $m=-2n$. When $a=b=0$ this table gives the $\Q$-dimensions of the Khovanov homology of $K_n$ with homological grading on the vertical axis and quantum grading on the horizontal axis. This is the $E_1$ page of Lee's spectral sequence.}
\label{table:knotKhovanov}
\end{table}
\begin{table}
\resizebox{6.2in}{!}{
\begin{tabular}{|r||c|c|c|c|c|c|c|c|c|c|c|c|c|c|c|c|c|}
\hline
n+6&&&&&&&&&&&&&&&&&1\\\hline
n+5&&&&&&&&&&&&&&&1&&\\\hline
n+4&&&&&&&&&&&&&&&&&\\\hline
n+3&&&&&&&&&&&&&&1&&&\\\hline
n+2&&&&&&&&&&&&1&&1&&&\\\hline
n+1&&&&&&&&&&&&1&1&&&&\\\hline
n&&&&&&&&&&&1&1&&&&&\\\hline
n-1&&&&&&&&&&1&1&1&&&&&\\\hline
n-2&&&&&&&&&1&2&&&&&&&\\\hline
n-3&&&&&&&&1&1&1&&&&&&&\\\hline
n-4&&&&&&&1&1&1&&&&&&&&\\\hline
n-5&&&&&&&1&&&&&&&&&&\\\hline
n-6&&&&&&&1&&&&&&&&&&\\\hline
n-7&&&&&1&&&&&&&&&&&&\\\hline
\vdots&&&&&&&&&&&&&&&&&\\\hline
1&&a&b&&&&&&&&&&&&&&\\\hline
0&1&1+a&b&&&&&&&&&&&&&&\\\hline
 &-1&1&3&$\cdots$&m-13&m-11&m-9&m-7&m-5&m-3&m-1&m+1&m+3&m+5&m+7&m+9&m+11\\\hline
\end{tabular}
}
\caption{Here $m=-2n$. When $a=b=0$ this table gives the $\Q$-dimensions of the Khovanov homology of $K_n^{\tau}$ with homological grading on the vertical axis and quantum grading on the horizontal axis. This is the $E_1$ page of Lee's spectral sequence.}
\label{table:knotmKhovanov}
\end{table}

See Tables \ref{table:knotKhovanov} and \ref{table:knotmKhovanov} for the $E_1$ page on which the following analysis is carried out. In order to preserve one copy of $\Qsub{-1}$ and one copy of $\Qsub{1}$ in the $0^{th}$ homological grading we must have $d_r^{0,-1} = 0$ and $d_r^{0,1}$ acting trivially on one copy of $\Q$ for every $r$.

We may computationally verify another base case where $n=9$ and then assume $n\geq10$.  By the above inductive results, we know that $Kh^2(K_n)=0$ when $n\geq 10$. Therefore, $d_r^{1,1}=0$ for all $r\geq 1$. Thus, if $a\neq 0$, an additional copy of $\Q$ will survive in $E_{\infty}^{1,1}$ since it cannot be in the image of any $d_r$ for $r>0$. This contradicts Lee's result that there can only be two copies of $\Q$ on the $E_{\infty}$ page. Therefore $a=0$ and $d_r^{0,1}=0$ for all $r\geq 1$. Because the row corresponding to the first homological grading has zeros in quantum gradings greater than $3$, $d_r^{0,3}=0$ for all $r\geq 1$. Therefore, if $b\neq 0$, an additional copy of $\Q$ will survive in $E_{\infty}^{0,3}$, again contradicting Lee's result. Therefore $a=b=0$, and the Khovanov homology of $K_n$ and $K_n^{\tau}$ is as stated in the theorem.

\end{proof}

\section{Observation And Speculation}
\label{sec:Observation}
The families of knots which we have employed in this paper are all non-alternating slice knots, and in particular, are formed from the band sum of a two-component unlink. There are other infinite families of slice knots for which these computational techniques using skein exact sequences and concordance invariants work. For example, Hedden and Watson~\cite{Hedden:Botany} prove that there are infinitely many knots with isomorphic Floer groups in any given concordance class, whereas Greene and Watson~\cite{GW:Turaev} have worked with the Kanenobu knots. Certain pretzel knots (see~\cite{Starkston}) also share this property. Nor is the non-alternating status of these knots a coincidence; in fact there can only be finitely many alternating knots of a given knot Heegaard Floer homology type.

\begin{proposition}
Let $K$ be an alternating knot. There are only finitely many other alternating knots with knot Floer homology isomorphic to $\hfk(K)$ as bigraded groups.
\end{proposition}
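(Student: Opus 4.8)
The plan is to reduce the statement to two classical inputs: that alternating knots are Floer-homologically thin, and that the determinant of an alternating knot bounds its crossing number from above.

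First I would recall \os's theorem that an alternating knot $K$ has \emph{thin} knot Floer homology, meaning that all of $\hfk(K)$ is concentrated in a single $\delta$-grading $\delta = a - m = \sigma(K)/2$ (in the convention of the introduction). The graded Euler characteristic recovers the Alexander polynomial, $\sum_{m,a}(-1)^m \dim \hfk_m(K,a)\, t^a = \Delta_K(t)$, and the determinant is $\det(K) = |\Delta_K(-1)|$. The key point is that when every generator has the same value of $a - m$, the parity $(-1)^{m+a}$ is constant across the whole complex; hence substituting $t=-1$ turns the signed count into the unsigned count up to an overall sign. This yields $\det(K) = \dim \hfk(K)$ for every thin, and in particular every alternating, knot. (Since $\hfk$ of an alternating knot is free, the total dimension is independent of the coefficient field, so this is compatible with computing over $\F_2$.)

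Next I would note that the total dimension $\dim \hfk(K)$ is manifestly an invariant of the bigraded group up to isomorphism. Thus if $K'$ is any alternating knot with $\hfk(K') \cong \hfk(K)$ as bigraded groups, the previous step gives $\det(K') = \dim \hfk(K') = \dim \hfk(K) = \det(K)$. Write $d$ for this common value of the determinant.

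Finally I would invoke Bankwitz's inequality: a nontrivial alternating knot satisfies $c(K') \le \det(K')$, where $c(K')$ denotes the crossing number (which is realized by any reduced alternating diagram, by the Tait conjectures). Therefore every alternating $K'$ sharing the Floer homology of $K$ has $c(K') \le d$, and since there are only finitely many knot types of crossing number at most $d$, only finitely many alternating knots can have $\hfk$ isomorphic to $\hfk(K)$. The argument is short once these ingredients are in hand; the one point requiring care is that it is \emph{thinness}, not merely knowledge of $\Delta_K$, that forces the total dimension to equal the determinant. I expect the conceptual obstacle to be precisely the finiteness mechanism: the Alexander polynomial alone controls only the genus of an alternating knot (via the breadth of $\Delta_K$), and genus does not bound crossing number, so the determinant-to-crossing-number inequality of Bankwitz is the essential and non-formal input that makes the set finite rather than merely of bounded genus.
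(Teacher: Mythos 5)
Your proof is correct and rests on the same two pillars as the paper's: the Bankwitz inequality $c(K') \le \det(K')$ for alternating knots, and the finiteness of knot types of bounded crossing number. The one place you diverge is in how you extract the determinant from the Floer homology. The paper does not invoke thinness at all: it simply notes that a bigraded isomorphism $\hfk(K') \cong \hfk(K)$ forces $\Delta_{K'} = \Delta_K$ (since the graded Euler characteristic is the Alexander polynomial), hence $\det(K') = \det(K)$, and then applies Bankwitz. Your route instead uses the \os\ thinness theorem for alternating knots to identify $\det(K')$ with the \emph{total dimension} $\dim \hfk(K')$. This costs you an extra (nontrivial) input, and your closing remark that thinness is ``the essential'' ingredient is not quite right for the proposition as stated --- the Euler-characteristic argument suffices. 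What your version buys in exchange is a genuinely stronger conclusion: only finitely many alternating knots can share even the ungraded total dimension of $\hfk(K)$, not merely the bigraded isomorphism type. Both arguments are sound; the sign in your normalization $\delta = \sigma(K)/2$ versus $-\sigma(K)/2$ is a convention issue that does not affect anything.
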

\begin{proof}
Suppose to the contrary that $K$ belongs to an infinite family $\{K_n\}_{n\in\Z}$ of alternating knots sharing the same knot Floer groups. Since $\hfk(K_n)\cong\hfk(K)$ and knot Floer homology categorizes the Alexander polynomial,
\[
	\det(K_n) = |\Delta_{K_n}(-1)| =  |\Delta_{K}(-1)| = \det(K)
\]
for all $n$. Each knot $K_n$ admits a reduced alternating diagram $D_n$ with crossing number $c(D_n)$. The Bankwitz Theorem implies that $c(K_n) \leq \det(K_n)$. However, there are only finitely many knots of a given crossing number, and in particular $c(K_n)$ grows arbitrarily large with $n$, which contradicts that $c(K_n) \leq \det(K)$.
\end{proof}
This fact leads to the interesting open question of whether there are infinitely many quasi-alternating knots of a given knot Floer type. Greene formulates an even stronger conjecture in~\cite{Greene:Homologically}, and proves the cases where $\det(L)=1,2$ or $3$.
\begin{conjecture}[Conjecture 3.1 of~\cite{Greene:Homologically}]
There exist only finitely-many quasi-alternating links with a given determinant.
\end{conjecture}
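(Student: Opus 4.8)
The plan is to translate the statement into the languages of the double branched cover and of reduced Khovanov homology, in both of which the determinant reappears as a genuinely restrictive quantity, and then to attempt a Bankwitz-type finiteness of the same flavor as the preceding proposition. Fix a determinant $d$ and suppose, for contradiction, that $\{L_n\}$ is an infinite family of pairwise distinct quasi-alternating links with $\det(L_n)=d$ for all $n$.

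First I would record the two standard consequences of quasi-alternation. On the one hand, each $L_n$ is homologically thin, so its reduced rational Khovanov homology $\widetilde{\operatorname{Kh}}(L_n;\Q)$ is supported on a single $\delta$-diagonal and satisfies $\dim_\Q\widetilde{\operatorname{Kh}}(L_n;\Q)=\det(L_n)=d$, the bigraded groups being determined by the Jones polynomial together with the signature. On the other hand, $\Sigma_2(L_n)$ is an L-space with $|H_1(\Sigma_2(L_n))|=\det(L_n)=d$. Thus the family $\{L_n\}$ yields infinitely many thin links of total Khovanov rank exactly $d$, equivalently infinitely many L-space branched double covers of fixed first-homology order $d$.

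Next I would try to convert thinness into a crossing-number bound. Since the $d$ generators lie on one diagonal and, as for alternating links, one expects the support to be an unbroken staircase, the occupied homological gradings span an interval of length at most $d-1$, so $\operatorname{span} V_{L_n}\le 2(d-1)$. If a quasi-alternating link could be shown to admit an adequate diagram $D_n$ — or, more directly, if one had a Bankwitz-type inequality $c(L)\le f(\det L)$ for quasi-alternating links paralleling the bound $c(K)\le\det(K)$ used above in the alternating case — then $\operatorname{span} V_{L_n}=c(D_n)$ would force $c(L_n)$ to be bounded in terms of $d$. As there are only finitely many links of bounded crossing number, this would contradict the infinitude of $\{L_n\}$ and prove the conjecture.

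I expect the crux to be precisely this last bound, and this is where the problem is genuinely open. Neither bounded total Khovanov rank nor bounded $|H_1|$ of an L-space is known to force finiteness: the determinant $d$ alone controls neither the number of crossings nor the rank of a definite $4$-manifold filling $\Sigma_2(L)$. The lattice-theoretic route — $\Sigma_2(L)$ bounds a negative-definite filling whose intersection form, by Donaldson's diagonalization theorem together with the sharpness of the L-space $d$-invariants, embeds into $-\Z^N$ as a changemaker-type lattice of discriminant $d$ — is what lets Greene enumerate $\det(L)=1,2,3$, since for those small values the admissible lattices form a finite list realized by finitely many links; but for general $d$ the rank $N$ is not controlled by the discriminant, so the enumeration need not terminate. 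I would therefore concentrate on either (i) proving the Bankwitz-type crossing bound for quasi-alternating links, for instance by showing their natural diagrams are adequate so that $\operatorname{span} V_L=c(L)$, or (ii) strengthening the $d$-invariant obstruction so as to bound the rank $N$ by a function of $d$. Securing either bound is the essential difficulty, and is the reason the statement remains only a conjecture.
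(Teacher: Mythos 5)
You were asked about a statement that the paper does not prove: it is quoted verbatim as an open conjecture (Conjecture 3.1 of Greene's paper), recorded immediately after the Bankwitz-based finiteness proposition for alternating knots, with the remark that Greene has settled only the cases $\det(L)=1,2,3$. So there is no proof in the paper to compare against, and the honest conclusion of your write-up --- that the crux bound is missing and the statement remains open --- is exactly the right one. Your framing also matches the paper's own: the conjecture appears there precisely as the hoped-for strengthening of the argument ``$c(K)\le\det(K)$ plus finiteness of links of bounded crossing number,'' which is the template you attempt to transplant from the alternating to the quasi-alternating setting, and your two standard inputs (thinness of reduced Khovanov homology with total rank $\det$, and $\Sigma_2(L)$ an L-space with $|H_1|=\det$) are the correct and standard consequences of quasi-alternation.

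Two technical cautions about the sketch itself, so that the conditional steps are flagged accurately. First, the ``unbroken staircase'' property of the thin diagonal (equivalently, the absence of internal zero coefficients of the Jones polynomial) is a theorem for alternating links but is not known for quasi-alternating ones, so even your intermediate bound $\operatorname{span} V_{L_n}\le 2(d-1)$ is conjectural rather than automatic from thinness. Second, the Kauffman--Murasugi--Thistlethwaite inequality bounds the span of the Jones polynomial \emph{above} by the crossing number; by itself it therefore caps the crossing number from \emph{below} by the span, which is the useless direction here. You correctly note that adequacy would reverse the inequality, but quasi-alternation is defined recursively and produces no preferred diagram whatsoever, and it is not known that quasi-alternating links admit adequate diagrams --- this is exactly why route (i) is not a routine verification but the heart of the open problem. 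One small attribution point: Greene's low-determinant cases rest on Donaldson diagonalization applied to the definite $4$-manifolds bounded by $\Sigma_2(L)$ (coming from Goeritz forms) together with sharp $d$-invariant constraints; the ``changemaker'' lattice machinery belongs to his later work, though your diagnosis of why the enumeration fails to terminate for general $d$ --- the discriminant does not control the rank $N$ of the lattice --- is the correct one.
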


In Section~\ref{sec:Khovanov}, we mention that $K_0$ and $K_0^\tau$ have the same Khovanov homology with $\F_2$ coefficients. In fact, $(K_0,K_0^\tau)$ is one of five pairs of genus two mutants appearing in~\cite{DGST:Behavior}, none of which can be distinguished by Khovanov homology over $\F_2$. Bloom and Wehrli~\cite{Bloom},\cite{Wehrli:Mutation} have shown that Khovanov homology with $\F_2$ coefficients is invariant under component-preserving Conway mutation. This leads to another unanswered question.
\begin{question}
\label{question:z2Khovanov}
Is Khovanov homology with $\F_2$ coefficients invariant under genus two mutation?
\end{question}
Because there is a spectral sequence relating the reduced Khovanov homology of $L$ over $\F_2$ to the Heegaard Floer homology of the branched double cover of $-L$, this raises another natural question. 
\begin{question}
If $K$ and $K^\tau$ are genus two mutant knots, is $\rank \hf(\Sigma_2(K)) = \rank \hf(\Sigma_2(K^\tau))$?
\end{question}

Genus two mutation provides a method for producing closely related knots and links, but more generally it is an operation on three manifolds. This yields yet another unanswered question:
\begin{conjecture}
Let $M$ be a closed, oriented three-manifold with an embedded genus two surface $F$. If $M^\tau$ is the genus two mutant of $M$, then
\[
	\rank \hf(M) = \rank \hf(M^\tau)
\]
\end{conjecture}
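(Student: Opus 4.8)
The plan is to compute both sides with bordered Heegaard Floer homology for three-manifolds with higher-genus boundary. Cutting along $F$ writes $M = M_1 \cup_F M_2$, where each $M_i$ carries $F$ as parametrized boundary, and the pairing theorem of Lipshitz--Ozsv\'ath--Thurston computes $\hf(M)$ as the homology of a box tensor product $\widehat{\mathit{CFA}}(M_1)\boxtimes \widehat{\mathit{CFD}}(M_2)$ over the strands algebra $\mathcal{A}(F)$ of the genus two pointed matched circle. Since $M^\tau = M_1 \cup_\tau M_2$ is obtained by precomposing the gluing with the hyperelliptic involution $\tau \in \mathrm{MCG}(F)$, in bordered language it is computed by inserting the type-$DA$ bimodule $\widehat{\mathit{CFDA}}(\tau)$ of the mapping class $\tau$:
\[
	\hf(M^\tau)\;\simeq\;\widehat{\mathit{CFA}}(M_1)\boxtimes \widehat{\mathit{CFDA}}(\tau)\boxtimes \widehat{\mathit{CFD}}(M_2).
\]
The conjecture thus reduces to a single structural statement: box-tensoring with $\widehat{\mathit{CFDA}}(\tau)$ preserves the total rank of homology for every input.

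Next I would exploit the defining property of $\tau$, namely that it fixes every unoriented simple closed curve on $F$ up to isotopy and acts as $-\mathrm{id}$ on $H_1(F)$, so that $\tau$ is central in $\mathrm{MCG}(F)$ and satisfies $\tau^2 = \mathrm{id}$. The latter gives $\widehat{\mathit{CFDA}}(\tau)\boxtimes\widehat{\mathit{CFDA}}(\tau)\simeq \mathbb{I}$, so $\tau$ induces a self-inverse auto-equivalence of the module category of $\mathcal{A}(F)$; this alone does not suffice, since the two factors $M_1$ and $M_2$ are fixed and only the gluing is twisted. The approach I would pursue is to show that $\widehat{\mathit{CFDA}}(\tau)$ is self-dual under the duality that interchanges the type-$A$ and type-$D$ sides (induced by orientation reversal of $F$), because that duality is exactly compatible with the rank functional on the pairing. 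Self-duality of $\widehat{\mathit{CFDA}}(\tau)$ would force $\rank\hf(M) = \rank\hf(M^\tau)$ without the bimodule being the identity, which is the behaviour one must expect given that mutation genuinely moves homology between gradings.

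The main obstacle is precisely that $\widehat{\mathit{CFDA}}(\tau)$ is not the identity bimodule: the bigraded computations in Table~\ref{table:hfkandkh} show that mutation really does redistribute homology across gradings, so there is no chain-level isomorphism to transport, and total rank of homology is not preserved formally under a tensor product. Establishing rank-preservation therefore demands an honest understanding of the action of $\tau$ on the genus two strands algebra and its idempotents and generators --- a computation considerably heavier than the torus case, and, crucially, one that cannot be replaced by the skein-exact-sequence and concordance-invariant arguments that drove the knot families here, where sliceness and the vanishing of $\tau$ and $s$ (Lemma~\ref{lemma:tauands}) pinned down the total rank through a single controlled long exact sequence. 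I expect the self-duality of $\widehat{\mathit{CFDA}}(\tau)$, together with its compatibility with the rank of the box tensor product, to be the crux of the argument.

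As a first step and sanity check, I would specialize to the handlebody mutations used in this paper and to the case where $F$ is tubed from a Conway sphere (Section~\ref{sec:Mutation}), where the extra product structure should make the involution bimodule explicitly computable, and then attempt the general parametrized surface by cut-and-paste together with the surgery exact triangle to control how rank behaves under the elementary twists relating the two gluings.
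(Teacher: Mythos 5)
The first thing to note is that the paper does not prove this statement: it appears in Section~\ref{sec:Observation} as a conjecture, supported only by the computed examples, so your proposal cannot be measured against an existing argument and must stand entirely on its own. As a proof it does not yet stand. The reduction to bordered Floer homology is reasonable when $F$ separates, and the identities $\tau^2=\mathrm{id}$ and $\widehat{\mathit{CFDA}}(\tau)\boxtimes\widehat{\mathit{CFDA}}(\tau)\simeq\mathbb{I}$ are correct, but the entire weight of the argument rests on two assertions you neither prove nor make precise: that $\widehat{\mathit{CFDA}}(\tau)$ is ``self-dual'' under the duality exchanging the type-$A$ and type-$D$ sides, and that this self-duality forces the total rank of $\widehat{\mathit{CFA}}(M_1)\boxtimes\widehat{\mathit{CFDA}}(\tau)\boxtimes\widehat{\mathit{CFD}}(M_2)$ to equal that of $\widehat{\mathit{CFA}}(M_1)\boxtimes\widehat{\mathit{CFD}}(M_2)$. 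The second implication is the real gap: the total rank of the homology of a box tensor product is not a formal invariant of the factors, and no duality property of the middle bimodule alone can control it without some additional mechanism (a collapsing spectral sequence, a filtered quasi-isomorphism, an Euler-characteristic-plus-positivity argument, or the like). You candidly flag this obstacle yourself, but the proposal contains no idea for overcoming it, so what you have is a plausible framework plus a restatement of the conjecture in bimodule language, not a proof.

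Two smaller points. The conjecture as stated does not require $F$ to separate $M$; if $F$ is non-separating, cutting along $F$ yields a single manifold with two boundary components and the pairing must be replaced by a self-gluing, so your decomposition $M=M_1\cup_F M_2$ silently narrows the statement. And the sanity checks you propose (handlebody mutations, surfaces tubed from Conway spheres) are essentially the territory the paper already probes by skein-exact-sequence and concordance-invariant arguments for specific knot families; re-verifying those would be consistent with the conjecture but would not test the structural claim about $\widehat{\mathit{CFDA}}(\tau)$ on which your strategy depends.
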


The question of whether the total rank is preserved under Conway mutation remains an interesting problem. The evidence that we offer above suggests that the total ranks of knot Floer homology and Khovanov homology are also preserved by genus two mutation. Because genus two mutation along a surface which does not bound a handlebody does not correspond in an obvious way to an operation on a knot diagram, a combinatorial proof of this general statement may be difficult to obtain.
\section{Acknowledgments}
We would like to thank our advisors, Cameron Gordon and Robert Gompf, for their guidance and support. We would also like to thank John Luecke, Matthew Hedden, and Cagri Karakurt for helpful conversations. We are also grateful to Adam Levine for pointing out an error in a previous version, and to the anonymous referee who made numerous helpful comments. The first author was partially supported by the NSF RTG under grant no. DMS-0636643. The second author was supported by the NSF Graduate Research Fellowship under grant no. DGE-1110007.
\bibliographystyle{plain}
\bibliography{bibliography}

\end{document}